\begin{document}
\title{Compactly supported $\bA^1$-Euler characteristic and the Hochschild complex}

\author[Arcila-Maya]{Niny Arcila-Maya}
\address{Department of Mathematics, Duke University, Durham~NC, US}
\email{niny.arcilamaya@duke.edu, kirsten.wickelgren@duke.edu}

\author[Bethea]{Candace Bethea}
\address{Department of Mathematics, University of South Carolina, Columbia~SC, US}
\email{betheac15@gmail.com}

\author[Opie]{Morgan Opie}
\address{Department of Mathematics, University of California, Los Angeles~CA, US}
\email{mopie@math.ucla.edu}

\author[Wickelgren]{Kirsten Wickelgren}

\author[Zakharevich]{Inna Zakharevich}
\address{Department of Mathematics, Cornell California, Ithaca~NY, US}
\email{zakh@math.cornell.edu}

\subjclass[2020]{14F42 (Primary), 19E15, 13D03, 55M05 (Secondary)}
\keywords{$\bA^1$-Euler characteristic, Grothendieck--Witt group, Hochschild cohomology, Hermitian K-theory}
\begin{abstract}
We show the $\bA^1$-Euler characteristic of a smooth, projective scheme over a characteristic $0$ field is represented by its Hochschild complex together with a canonical bilinear form, and give an exposition of the compactly supported $\bA^1$-Euler characteristic $\chi^c_{\bA^1}: K_0(\Var_{k}) \to \GW(k)$ from the Grothendieck group of varieties to the Grothendieck--Witt group of bilinear forms. We also provide example computations.
\end{abstract}

\maketitle

\section{Introduction}
The Euler characteristic is one of the first combinatorial topological invariants.  Leonhard Euler originally introduced it for polyhedra, claiming that for any Euclidean polyhedron it is the case that for any polyhedron 
\[
\mathrm{\#\ vertices} - \mathrm{\#\ edges} + \mathrm{\#\ faces} = 2.
\]
Although this is true for any convex polyhedron, this is a surprisingly difficult fact to state formally and correctly, and depends intrinsically on what one means by ``polyhedron''. The Euler characteristic turns out to be a \emph{topological} invariant, which is most classically defined for a finite CW complex $X$ to be
\[
\chi(X) \defeq \sum_{i=0}^\infty (-1)^i \dim_\Q H_i(X;\Q).
\]
To make the Euler characteristic well-defined for non-compact sets it is necessary to replace homology with cohomology with compact support; with this definition it follows that for a closed subspace $Z$ of $X$ with open complement $U$,
\begin{equation} \label{eq:add}
  \chi(X) = \chi(Z) + \chi(U).
\end{equation}

If instead of general topological spaces we consider only varieties (or schemes) over $\C$, the Euler characteristic of a variety $X$ can be defined by the formula
\[
\chi(X) \defeq \chi(X(\CC)) = \sum_{i=0}^\infty (-1)^i \dim_\Q H^i_c(X(\C);\Q) \in \mathbb{Z}.
\] 

This invariant can be elevated by replacing $\Q$ with any field $k$, and retaining more cohomological information. Instead of taking dimensions, we can consider the vector spaces $H^i_c(X(\C);k)$ as elements of $K_0(k)$, the Grothendieck group of the ground field.  This gives the new definition
\[
\chi_k(X) \defeq  \sum_{i=0}^\infty (-1)^i [H^i_c(X(\C);k)] \in K_0(k).
\]
As $K_0(k) \cong \mathbb{Z}$, with the isomorphism given by the dimension, this
may not appear to be a useful observation.  

However, in the case when $X$ is a smooth, projective variety, $X(\CC)$ is a manifold and Poincar\'e duality states that there is a duality on the cohomology.  By keeping track of the duality, we obtain an enriched Euler characteristic: such an Euler characteristic takes values in bilinear forms, rather than vector spaces.  The form-valued Euler characteristic has beautiful applications in topology \cite{Rohlin52} \cite{Freedman82} \cite{Donaldson83}. Stabilizing, we get an invariant taking values in the Grothendieck--Witt group, enriching the Euler characteristic valued in $K_0(k) \cong \mathbb{Z}$. 

	\begin{definition}
Let $k$ be a field.  The \emph{Grothendieck--Witt group} $\GW(k)$ is the free abelian group generated by isomorphism classes of $k$-vector spaces equipped with a symmetric, nondegenerate bilinear form, under the relation that 
\[
[V,b] + [V',b'] = [V\oplus V', b\oplus b'].
\]
	\end{definition}

To make this approach work correctly in the context of $\AA^1$-homotopy theory,
it is necessary to shift perspective. We will work with coherent cohomology, and the relevant additional structure is given by Grothendieck--Serre duality. Before describing the cohomological approach, we begin with the abstract definition of the categorical Euler characteristic.  For the rest of this paper we restrict to fields of characteristic $0$.

	\begin{definition}\label{abstracteuler}
Let $X$ be a smooth, projective variety over a field $k$. Then the motivic suspension spectrum $\Sigma^\infty_T(X_+)$ is dualizable in the stable motivic homotopy category $SH(k)$ \cite[Theorem 5.22]{hoyois-equivariant} \cite[Appendix A]{Hu_Picard} \cite[Section 1 or ArXiv version 3 Section 1.1]{Levine-EC} \cite{riou2005dualite} \cite[Section 2]{voevodsky2003motivic}. Thus there exists an element 
\[
\chi^{\AA^1}(X) \defeq \Big( 1_k \rto \Sigma^\infty_T(X_+) \wedge_k D(\Sigma^\infty_T(X_+)) \rto  D(\Sigma^\infty_T(X_+)) \wedge_k \Sigma^\infty_T(X_+) \rto 1_k\Big)
\]
in $ \End_{SH(k)}(1_k),$ where $1_k$ denotes the motivic sphere spectrum, the first and last maps are coevaluation and evaluation, the middle is the symmetry swap isomorphism, and $D(-)$ denotes dualizing. See for example \cite{hoyois2015quadratic} or \cite{Levine-EC} for further discussion. 
	\end{definition} 

It is a beautiful theorem of F. Morel that $\End_{SH(k)}(1_k) \cong \GW(k)$, see for example \cite{Mor06} or \cite[Theorem 1.23, Corollary 1.24]{A1-alg-top}, giving the categorical Euler characteristic $ \chi^{\AA^1}(X)$ in $\GW(k)$.  So, one can hope to recover an enhancement of a cohomological construction by explicitly describing the map in Definition \ref{abstracteuler}. 

A description in terms of coherent duality was independently suggested by M.J.Hopkins, A. Raksit, and J.-P. Serre in oral communication. It was proven to equal  $ \chi^{\AA^1}(X)$ by M. Levine and A. Raksit \cite[Theorem 1.3]{levine_raksit}. This is the description that motivated the introduction above, and as we will use it to prove our main result, we give a more detailed account now. 

For any vector space $V$ over $k$, write $V[i]$ for the chain complex consisting
of $V$ concentrated in degree $-i$, with $0$'s elsewhere. Consider a smooth and proper variety $X$ of pure dimension $d$. 

	\begin{definition}\label{defHdg} 
Let
\begin{equation} \label{eq:Hdg}
\Hdg(X/k)\defeq \bigoplus_{i,j}^d H^i(X, \Omega^j_{X/k})[j-i] \in \Ch_k.
\end{equation}

This is equipped with a symmetric bilinear form via the cup product and the canonical trace map $\Tr:H^d(X;\Omega^d_{X/k}) \rto k$
\begin{equation}\label{Hodge_pairing_eqn}
\begin{tikzcd}
\Hy^{i}(X,\Omega^{j}_{X/k})\otimes \Hy^{d-i}(X,\Omega^{d-j}_{X/k}) \arrow[r,"\cup"] & \Hy^{d}(X,\Omega^{d}_{X/k}) \arrow[r,"\Tr"]  & k.
\end{tikzcd}
\end{equation}
This structure produces a bilinear form $\Tr$ on $\Hdg(X/k)$. \end{definition} See \cite[Section 8D]{levine_raksit} for a review of the formalism of chain complexes with bilinear forms. The bilinear form $\Tr$ in fact recovers $\chi^{\bA^1}$ as defined above. 

	\begin{theorem}[{\cite[Theorem 1.3]{levine_raksit}}]\label{koszulworks}
For $X$ smooth and projective over a field $k$ of characteristic not $2$, $$\chi^{\AA^1}(X/k) = (\Hdg(X/k), \Tr) \in \GW(k).$$ 
	\end{theorem}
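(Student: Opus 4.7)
The plan is to compute $\chi^{\AA^1}(X/k)$ by transporting the motivic categorical trace into a coherent setting where Grothendieck--Serre duality is explicit, and then using a Hochschild--Kostant--Rosenberg (HKR) style identification to recognize the resulting bilinear form as the Hodge trace pairing $\Tr$.

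First, since $X$ is smooth and projective, Atiyah duality in $SH(k)$ identifies $D(\Sigma^\infty_T(X_+))$ with the Thom spectrum of the virtual normal bundle $-T_X$. The coevaluation and evaluation factor through the diagonal $\Delta \colon X \to X \times X$, so the categorical trace of Definition \ref{abstracteuler} is the motivic avatar of the self-intersection of the diagonal in $X \times X$. The symmetry swap in the middle of the trace diagram records how this self-intersection encodes more than a rank; it carries the bilinear form data that lifts to $\GW(k)$.

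Second, in the bounded derived category of coherent sheaves $D^b(X)$, the object $\mathcal{O}_X$ is dualizable with dual $\omega_X[d]$ via Serre duality. There the categorical trace of the identity of $X$ computes Hochschild homology $\mathrm{HH}(X/k)$, which by HKR (valid in characteristic $0$) is canonically identified with $\Hdg(X/k)$. The symmetry in the coherent trace diagram together with the Serre trace $\Tr \colon H^d(X, \Omega^d_{X/k}) \to k$ produces precisely the pairing in \eqref{Hodge_pairing_eqn}. This reduces the theorem to showing that the motivic and coherent categorical traces agree as elements of $\GW(k)$.

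The principal obstacle is this last comparison, since a naive Hodge or Betti realization $SH(k) \to D(k)$ collapses $\End_{SH(k)}(1_k)$ onto $K_0(k) \cong \mathbb{Z}$ and cannot detect the form structure in $\GW(k)$. To retain the quadratic refinement, I would upgrade the coherent Hochschild construction so that its symmetry is computed motivically, for instance by working with modules over a motivic ring spectrum whose unit trace refines the Serre trace. Alternatively, and more in line with the Levine--Raksit approach, one can appeal to a motivic Gauss--Bonnet theorem expressing $\chi^{\AA^1}(X)$ as the pushforward of an $\AA^1$-Euler class of $T_X$, and compare this with an explicit Koszul/Hodge-theoretic model of the same class whose pushforward to $\GW(k)$ manifestly recovers $(\Hdg(X/k), \Tr)$. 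The delicate point, in either route, is tracking the symmetry datum through the identification of the motivic dual $\Th(-T_X)$ with the coherent dual $\omega_X[d]$, since it is exactly this datum that distinguishes elements of $\GW(k)$ with the same rank.
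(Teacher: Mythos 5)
The paper does not prove this statement; it is imported wholesale from Levine--Raksit \cite[Theorem 1.3]{levine_raksit}, so there is no internal argument to compare against. Judged on its own terms, your proposal is a reasonable map of the terrain but it does not constitute a proof: the entire mathematical content of the theorem is concentrated in the step you label ``the principal obstacle,'' and neither of the two routes you sketch for overcoming it is actually carried out. Saying that one should ``work with modules over a motivic ring spectrum whose unit trace refines the Serre trace,'' or ``appeal to a motivic Gauss--Bonnet theorem,'' names the strategy of Levine--Raksit without supplying its substance. Concretely, what is missing is (i) the identification of $\chi^{\AA^1}(X)$ with the pushforward $p_*(e(T_X))$ of an Euler class in an $\mathrm{SL}$-oriented motivic cohomology theory (this is the Gauss--Bonnet input, and it requires knowing that the unit map to the relevant theory detects $\GW(k)$ on $\pi_0$, not merely the rank); and (ii) the explicit computation, in hermitian $K$-theory or an equivalent coherent-duality model, that this pushforward is the complex $\Hdg(X/k)$ with the cup-product-plus-Serre-trace form of \eqref{Hodge_pairing_eqn}. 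Step (ii) is exactly the point where the symmetry datum on $\operatorname{Th}(-T_X)$ must be matched against the symmetry of Grothendieck--Serre duality, and your proposal correctly identifies this as delicate but then stops.

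A secondary issue: your second paragraph asserts that the coherent categorical trace ``produces precisely the pairing in \eqref{Hodge_pairing_eqn},'' and treats this as unproblematic, reserving all difficulty for the motivic--coherent comparison. In fact identifying the coherent trace pairing with the Hodge cup-product pairing is itself nontrivial (it is essentially the content of Proposition \ref{koszulagrees} and of the HKR-with-multiplication statement, Theorem \ref{HnmuisKoszul}, elsewhere in the paper): one must check that the HKR isomorphism is compatible with the multiplicative structure and with the trace maps on both sides, typically via a multiplicative spectral sequence argument. So even the half of the argument you present as established contains a gap that the paper addresses separately. If you want a self-contained account, you should either reproduce the Levine--Raksit argument (motivic Gauss--Bonnet plus the computation of the Euler class of $T_X$ in hermitian $K$-theory) or explicitly cite it, as the paper does.
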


We remind the reader that in this paper $k$ will be a field of characteristic $0$. Another description is as follows. \begin{definition}\label{koszuleuler}Let $p_X: X \to \Spec k$ denote the structure map of a smooth proper variety $X$. Let \begin{equation*} 
\T_X:= \oplus^0_{i=-d}(\wedge^{-i} T^*_X),
\end{equation*} 
with zero differential. This is the Kozsul complex of the tangent bundle $T_X$ with respect to the zero section. It has a natural non-degenerate bilinear form $$\beta_X: \T_X \otimes \T_X \to \T_X \to \wedge^{d} T^*_X [d] $$ given by composing the multiplication of forms with projection to the $-d$th term of $\T_X$.
	\end{definition}

Because $X$ is smooth and proper, Serre duality allows us to pushforward a non-degenerate bilinear form valued in $\wedge^{d} T^*_X [d]$ and obtain a non-degenerate bilinear form valued in $k$ \cite[p. 7 Ideal Theorem c]{HartshorneRD}, \cite[Theorem 4.2.9]{calmes2009tensor}: 
\begin{equation}\label{equ:p_*Beta_XKoszul} 
\RR{p_X}_*\beta_X: \RR{p_X}_* \T_X \otimes  \RR{p_X}_* \T_X \to \RR{p_X}_* (\T_X \otimes \T_X) \to  \RR{p_X}_*\wedge^{d} T^*_X [d] \stackrel{\Tr}{\to} k \end{equation}
There is more discussion of this in \cite[Section 2.2]{bachmann_wickelgren} and we will discuss a similar situation in Lemma \ref{lm:BXnondegenerate}.

	\begin{proposition}[{\cite[Proposition 2.4]{bachmann_wickelgren}}]\label{koszulagrees}
Let $X$ be smooth and proper over $k$. With notation as above,
$(\RR{p_X}_* \T_X, \RR{p_X}_* \beta_X) = (\Hdg(X/k), \operatorname{Tr})$ in 
$\GW(k).$ 
	\end{proposition}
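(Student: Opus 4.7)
The plan is to identify $\RR{p_X}_* \T_X$ with $\Hdg(X/k)$ termwise in two stages and then verify that the bilinear forms are compatible under this identification. First, since $\T_X = \bigoplus_{j=0}^d \Omega^j_{X/k}[j]$ has zero differential, its derived pushforward decomposes as
\begin{equation*}
\RR{p_X}_* \T_X \simeq \bigoplus_{j=0}^d \RR{p_X}_* \Omega^j_{X/k}[j].
\end{equation*}
Each $\RR{p_X}_* \Omega^j_{X/k}$ is a bounded complex of $k$-vector spaces; because every $k$-vector space is both injective and projective, this complex is quasi-isomorphic to $\bigoplus_i H^i(X, \Omega^j_{X/k})[-i]$. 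Collecting terms gives a quasi-isomorphism $\RR{p_X}_* \T_X \simeq \Hdg(X/k)$.

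Next, I would verify that under this equivalence the form $\RR{p_X}_* \beta_X$ of \eqref{equ:p_*Beta_XKoszul} corresponds to the form $\Tr$ of Definition \ref{defHdg}. By construction $\beta_X$ is wedge product of forms $\Omega^a \otimes \Omega^b \to \Omega^{a+b}$ projected to the top piece $\Omega^d[d]$, nonzero exactly when $a+b=d$. Pushing forward through $\RR{p_X}_*$ and passing to cohomology, this produces, on the $(i,j)$-summand paired with the $(d-i, d-j)$-summand, the cup product $H^i(X, \Omega^j_{X/k}) \otimes H^{d-i}(X, \Omega^{d-j}_{X/k}) \to H^d(X, \Omega^d_{X/k})$. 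Composing with the Serre trace recovers exactly the pairing \eqref{Hodge_pairing_eqn}.

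Finally, I would invoke the general formalism for perfect complexes with nondegenerate bilinear forms (as in \cite[Section 8D]{levine_raksit} or \cite[Theorem 4.2.9]{calmes2009tensor}) to conclude that a symmetric quasi-isomorphism of such complexes represents the same class in $\GW(k)$. The main obstacle is the compatibility check in the second step: one has to confirm that the sheaf-level multiplication of differential forms, after applying $\RR{p_X}_*$ and splitting each factor into its cohomology summands, genuinely recovers the cup product with the correct signs under the $[j-i]$-shift conventions built into Definitions \ref{defHdg} and \ref{koszuleuler}. This is standard derived-categorical bookkeeping but requires careful tracking of the Koszul sign rule and the placement of the Serre trace on $\RR{p_X}_* \wedge^d T^*_X[d]$.
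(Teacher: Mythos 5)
Your argument is correct, but it is not the route the paper relies on: the paper does not prove this proposition at all, citing \cite[Proposition 2.4]{bachmann_wickelgren}, and the technique of that cited proof --- reproduced ``analogously'' in the proof of Theorem \ref{thm:B_XisChiA1_Xsmpr} --- is the multiplicative hypercohomology spectral sequence $E_2^{i,j}=\RR^i p_* H^j(\T_X)\Rightarrow H^{i+j}(\RR p_*\T_X)$ together with \cite[Lemmas 2.6 and 2.7]{bachmann_wickelgren}, which transport the form from the $E_2$-page through later pages and from $E_\infty$ to the abutment. Your direct splitting instead exploits two special features, that $\T_X$ has zero differential and that the base is a field, so that $\RR p_*\T_X$ is formal; this is legitimate here and can even be streamlined: by Definition \ref{GWassociated_to_bilinear_form_on_complex} the class in $\GW(k)$ of a form on a complex over $k$ depends only on the induced nondegenerate pairing on cohomology, so no choice of splitting is needed --- it suffices to observe that the pairing induced on $H^*(\RR p_*\T_X)\cong\bigoplus_{i,j}H^i(X,\Omega^j_{X/k})$ by $\RR p_*\beta_X$ is, by the very definition of the cup product via the lax monoidal structure map $\RR p_*\T_X\otimes\RR p_*\T_X\to\RR p_*(\T_X\otimes\T_X)$, exactly the pairing \eqref{Hodge_pairing_eqn}. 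This also disposes of your sign worry: both forms are literally ``cup product, wedge to the top, trace,'' and in any case a sign could only change the $\GW(k)$-class through the self-paired middle summand $H^{d/2}(X,\Omega^{d/2}_{X/k})$ when $d$ is even, since every other block pairs two distinct isotropic summands and is therefore hyperbolic regardless of signs. What the spectral-sequence approach buys, and why the paper leans on it, is that it applies when the complex of sheaves does not split termwise --- precisely the situation for $\HH^X(X)$ in Theorem \ref{thm:B_XisChiA1_Xsmpr}, where only the cohomology sheaves are identified (via Hochschild--Kostant--Rosenberg) and no analogue of your first step is available.
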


Each isomorphism class in $\GW(k)$ can be represented by a perfect chain complex over $k$ with a nondegenerate symmetric bilinear form. This is a result of Hermitian K-theory; see \cite{schlichting10} and \cite{Schlichting17} for background on Hermitian K-theory. A representing chain complex with duality provides the opportunity to do further homotopy theory. The example we have in mind will be described below. As a candidate representative, however, the Koszul complex has the following property: it depends on a section. There are situations in which one naturally has a section, e.g. \cite{CubicSurface}, but the association of the Euler characteristic to a scheme is arguably not one of them: We chose the zero section above, but is this a good choice? Any section would in fact do \cite[Proposition 2.4]{bachmann_wickelgren}, and it may be desirable not to choose at all. The main result of this paper allows this by offering the Hochschild complex as an alternative.

There is a canonical pairing on the Hochschild homology $\HH$ , which appears in greater generality in work of Alonso-Jeremías-Lipman, \cite{atjll}. This pairing can be viewed as coming from the connection between Grothendieck duality and Hochschild homology, due to Avramov, Lipman, and Iyengar, among others. We give a construction specific to our case of interest using work of Neeman in Section \ref{Formalism}. We then show that this agrees with $(\Hdg(X/k), \Tr)$ and $(\RR{p_X}_* \T_X, \RR{p_X}_* \beta_X)$ using the Hochschild--Kostant--Rosenberg theorem for smooth and proper schemes $X$ over a field of characteristic $0$.

	\begin{thmintro}
Let $X$ be a smooth, projective scheme over a field of characteristic $0$. $\HH(X)$ together with a canonical pairing represents the categorical $\AA^1$-Euler characteristic for smooth projective $X$.
	\end{thmintro} 

This is Theorem \ref{thm:B_XisChiA1_Xsmpr} in Section \ref{Formalism}. The example alluded to above is the following enrichment of the $\AA^1$-Euler characteristic. In joint as well as independent work, J. Campbell and the fifth-named author have constructed a spectrum whose $\pi_0$ is the Grothendieck group of varieties $K_0(\Var_k)$. See \cite{campbell, Z-kth-ass}. Independently, O. R\"ondigs \cite{rondigs} constructs a K-theory of varieties spectrum whose $\pi_0$ receives a surjective map from $K_0(\Var_k)$. (The Grothendieck group of varieties is by definition the universal cut-and-paste invariant, and is described in more detail in for example \cite{bittner04}.)  Bittner's presentation promotes the  $\AA^1$-Euler characteristic of a smooth, projective variety to a compactly supported  $\AA^1$-Euler characteristic for any variety over $k$. In more detail:

	\begin{theorem}[{\cite[Theorem 3.1]{bittner04}}] \label{bittnerpres}
Let $k$ be a field of characteristic $0$. The group $K_0(\Var_k)$ is isomorphic to the free abelian group generated by smooth projective varieties modulo the relation that for any smooth closed subvariety $Y \subseteq X$,
\[
[\Bl_YX] - [E] = [X] - [Y],
\]
where $\Bl_YX$ is the blow-up of $X$ at $Y$, and $E$ is the exceptional divisor of the blowup.
	\end{theorem}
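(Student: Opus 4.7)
The plan is to exhibit an isomorphism between $K_0(\Var_k)$ and the group $G$ described in the statement, i.e., the free abelian group on smooth projective $k$-varieties modulo the blow-up relation. The forward map $\phi\colon G \to K_0(\Var_k)$ sending a smooth projective $[X]$ to its class is well-defined: scissors gives $[\Bl_Y X] = [E] + [\Bl_Y X \setminus E] = [E] + [X \setminus Y]$ while $[X] = [Y] + [X \setminus Y]$, so $[\Bl_Y X] - [E] = [X] - [Y]$ holds in $K_0(\Var_k)$. The substantive content is the construction of an inverse $\psi$.

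For $\psi$, I would invoke Hironaka's theorems (available in characteristic $0$) to choose, for each variety $U$, a smooth projective compactification $U \hookrightarrow \overline U$ whose boundary $D = \overline U \setminus U = D_1 \cup \cdots \cup D_r$ is a simple normal crossings divisor with smooth projective components; set $D_I := \bigcap_{i \in I} D_i$. Define
\[
\psi(U) := [\overline U] - \sum_{\emptyset \neq I \subseteq \{1,\dots,r\}} (-1)^{|I|+1}\, [D_I] \ \in \ G,
\]
extend additively over disjoint unions of locally closed subvarieties, and then check both that $\psi(U)$ is independent of the chosen SNC compactification and that $\psi$ respects the scissors relation, so that it descends to a homomorphism $K_0(\Var_k) \to G$.

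The main obstacle is independence of compactification. By the weak factorization theorem of Abramovich--Karu--Matsuki--W{\l}odarczyk, any two SNC compactifications of $U$ (with identity on $U$) are connected by a zigzag of blow-ups and blow-downs along smooth centers lying in the boundary, reducing the check to invariance under a single such blow-up. Blowing up a smooth center $C$ transverse to the SNC boundary replaces each stratum $D_I$ meeting $C$ by an iterated blow-up of a smooth projective variety and introduces a new exceptional divisor which is a projective bundle over $C$; the resulting change in the inclusion-exclusion sum must be shown to telescope to zero modulo the Bittner relation. This is an intricate combinatorial bookkeeping, applying the blow-up relation simultaneously to many strata.

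Once independence is in hand, scissors compatibility $\psi([X]) = \psi([Z]) + \psi([X \setminus Z])$ reduces via Hironaka to the case where $Z$ is a smooth closed subvariety of a smooth projective $X$ meeting the SNC boundary transversely, and follows by comparing inclusion-exclusion sums built from a common ambient compactification. The identity $\psi \circ \phi = \mathrm{id}_G$ is immediate on generators (a smooth projective $X$ is its own SNC compactification, with empty boundary), while $\phi \circ \psi = \mathrm{id}$ follows by iteratively applying scissors in $K_0(\Var_k)$.
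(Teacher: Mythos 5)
This statement is quoted in the paper directly from Bittner's article and is not proved there, so there is no internal argument to compare against. Your outline reproduces the strategy of Bittner's original proof essentially verbatim: the forward map is well-defined because $\Bl_YX\setminus E\cong X\setminus Y$ gives the blow-up relation via scissors; the inverse is built from an SNC compactification by inclusion--exclusion on boundary strata, with Hironaka supplying the compactification and Abramovich--Karu--Matsuki--W{\l}odarczyk's weak factorization reducing independence of choices to invariance under one blow-up along a smooth center having normal crossings with the boundary. That is the correct route, and your identification of where the difficulty lies is accurate. Two caveats on completeness rather than correctness: the single-blow-up invariance check is precisely where the relation in $G$ is actually used, and it is a genuine multi-stratum computation (Bittner's Lemmas 4.2--4.4, resting on the structure of the exceptional divisor as a projective bundle and of the blown-up strata), so as written your argument defers the entire substantive step; and extending $\psi$ from smooth varieties to all of $K_0(\Var_k)$ by stratification requires a separate check that the result is independent of the chosen stratification (via common refinements), which you gesture at but do not state as an obligation. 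Neither point is a wrong turn --- they are exactly the items one must fill in to turn your sketch into Bittner's proof.
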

 
	\begin{definition} \label{def:chic}
Let $X$ be a smooth scheme over a field $k$ of characteristic $0$.  We define $\chi_c(X)$ as follows. For $\dim X = 0$ we define $\chi^{\AA^1}_c(X) = \chi^{\AA^1}(X)$.  For $\dim X > 0$, write $[X] = \sum_{i=1}^n \epsilon_i [X_i]$ in $K_0(\Var_k)$, where each $X_i$ is smooth projective and $\epsilon_i = \pm 1$. Then define
\[
\chi^{\AA^1}_c(X) \defeq \sum_{i=1}^n \epsilon_i \chi^{\AA^1}_c(X_i).
\]
	\end{definition}

	\begin{theorem}
$\chi_c^{\AA^1}$ is well-defined, and induces a homomorphism $K_0(\Var_k) \rto \GW(k)$.
	\end{theorem}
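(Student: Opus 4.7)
The plan is to use Bittner's presentation (Theorem \ref{bittnerpres}) to reduce the entire statement to a single blow-up identity for $\chi^{\AA^1}$ on smooth projective varieties. Bittner's theorem says that $K_0(\Var_k)$ is the free abelian group on isomorphism classes of smooth projective $k$-varieties modulo $[\Bl_Y X]-[E]=[X]-[Y]$ for every smooth closed embedding $Y\hookrightarrow X$ of smooth projective varieties. Hence the assignment $[X]\mapsto \chi^{\AA^1}(X)$ from Definition \ref{abstracteuler} extends to a (necessarily unique) homomorphism $K_0(\Var_k)\to \GW(k)$ if and only if the blow-up relation
\[
\chi^{\AA^1}(\Bl_Y X) - \chi^{\AA^1}(E) \;=\; \chi^{\AA^1}(X) - \chi^{\AA^1}(Y)
\]
holds in $\GW(k)$. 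Granting this, for any smooth $X$ the class $[X]\in K_0(\Var_k)$ can be written as $\sum \epsilon_i[X_i]$ with $X_i$ smooth projective, and $\chi^{\AA^1}_c(X):=\sum \epsilon_i\chi^{\AA^1}(X_i)$ depends only on the class $[X]$, giving both well-definedness and the homomorphism property in one stroke.

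The main work is the blow-up identity. I would prove it in the motivic stable homotopy category. The classical blow-up square
\[
\begin{tikzcd}
E \arrow[r,hook] \arrow[d] & \Bl_Y X \arrow[d] \\
Y \arrow[r,hook] & X
\end{tikzcd}
\]
is homotopy cocartesian in $\mathrm{Spc}(k)$ (Morel--Voevodsky), so applying $\Sigma^\infty_T(-_+)$ yields a cofiber sequence
\[
\Sigma^\infty_T E_+ \;\longrightarrow\; \Sigma^\infty_T (\Bl_Y X)_+ \oplus \Sigma^\infty_T Y_+ \;\longrightarrow\; \Sigma^\infty_T X_+
\]
in $SH(k)$ consisting of dualizable objects. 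The categorical Euler characteristic is additive on cofiber sequences of dualizable objects in a stable symmetric monoidal $\infty$-category (this is the motivic instance of May's additivity of traces), so
\[
\chi^{\AA^1}(\Bl_Y X) + \chi^{\AA^1}(Y) \;=\; \chi^{\AA^1}(E) + \chi^{\AA^1}(X),
\]
which is exactly the desired identity in $\GW(k)$.

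The principal obstacle is verifying additivity of the categorical Euler characteristic on the above cofiber sequence, since additivity of traces is notoriously subtle at the level of triangulated categories. If one prefers to avoid invoking the general $\infty$-categorical additivity, an alternative route is to use the Hodge model of Theorem \ref{koszulworks}: the Hodge cohomology blow-up formula $H^i(\Bl_Y X,\Omega^j)\cong H^i(X,\Omega^j)\oplus \bigoplus_{\ell=1}^{c-1} H^{i-\ell}(Y,\Omega^{j-\ell})$ (where $c=\mathrm{codim}(Y,X)$), combined with the projective bundle formula applied to $E\to Y$, gives a direct sum decomposition $\Hdg(\Bl_Y X)\oplus\Hdg(Y)\cong \Hdg(X)\oplus\Hdg(E)$ as complexes, and one then checks that these isomorphisms are isometries for the trace pairings of Definition \ref{defHdg}. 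The compatibility of the trace pairings under the projective bundle decomposition is the technical heart of this alternative and is where the most care is required.
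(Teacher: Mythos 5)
Your proposal is correct and its overall architecture — reduce everything to the single blow-up identity $\chi^{\AA^1}(\Bl_YX)-\chi^{\AA^1}(E)=\chi^{\AA^1}(X)-\chi^{\AA^1}(Y)$ via Bittner's presentation, then observe that well-definedness and the homomorphism property follow formally — is exactly the paper's. Where you diverge is in how the blow-up identity itself is established. The paper's proof is deliberately computational: it quotes M. Levine's $\GW(k)$-valued formulas $\chi^{\AA^1}(\Bl_YX)=\chi^{\AA^1}(X)+\sum_{i=1}^{c-1}\langle -1\rangle^i\chi^{\AA^1}(Y)$ for the blow-up and $\chi^{\AA^1}(E)=\sum_{i=0}^{c-1}\langle -1\rangle^i\chi^{\AA^1}(Y)$ for the exceptional divisor $E=\mathbb{P}N_YX$ (a projective bundle), and subtracts. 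Your first route — the Morel--Voevodsky homotopy-cocartesian blow-up square, the resulting cofiber sequence of dualizable objects in $SH(k)$, and additivity of the categorical Euler characteristic — is essentially the argument of R\"ondigs that the paper explicitly mentions and chooses not to follow; it is more conceptual and better suited to spectrum-level refinements (the question raised in the introduction), but it does hinge on the additivity-of-traces input you correctly flag as the principal obstacle, which requires an enhancement beyond the bare triangulated category (May's axioms or an $\infty$-categorical argument). Your second route via the Hodge blow-up and projective bundle formulas is closest in spirit to the paper's, but the isometry check you identify as the technical heart is precisely what Levine's formulas already package: by citing them, the paper gets the pairing compatibility for free. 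In short, all three routes work; the paper trades your conceptual input (additivity of traces) or your unfinished computation (isometry of the Hodge decomposition) for an appeal to Levine's results.
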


This follows from \cite[Theorem 5.2, Corollary 6.7]{rondigs}. We also provide an exposition in Theorem~\ref{chi_c_cut_and_paste_char_0}. 

	\begin{question} 
Does the compactly supported $\AA^1$-Euler characteristic $\chi_c^{\AA^1}$ lift to a map of spectra from a spectrum level version of $K_0(\Var_k)$ to Hermitian K-theory? What kind of information would such maps encode? 
	\end{question}

O. R\"ondigs gives an affirmative answer to the first question in \cite[Theorem 6.6]{rondigs}. He constructs his map using the categorical $\AA^1$-Euler characteristic and Waldhausen's model of $K$-theory. We suggest an alternate approach these questions, studying the association of the Hochschild complex to a smooth, projective scheme, especially as related to Bittner's presentation. Section \ref{Example} provides sample computations. 

\subsection*{Acknowledgements}
We wish to thank Tom Bachmann and Marc Hoyois for useful discussions.

We likewise express our gratitude to the Women in Topology III program and the Hausdorff Research Institute for Mathematics, as well as support from the National Science Foundation NSF-DMS 1901795, NSF-HRD 1500481 - AWM ADVANCE grant and Foundation Compositio Mathematica. Niny Arcila-Maya was supported by St. John's college Itoko Muraoka fellowship. Candace Bethea was supported by the Southern Region Education Board under the Dissertation Award. Morgan Opie was partially supported by National Science Foundation Award DGE-1144152.  Kirsten Wickelgren was partially supported by National Science Foundation Awards DMS-1406380 and CAREER-DMS-1552730.  Inna Zakharevich was partially supported by National Science Foundation Award CAREER-1846767. 

\subsection*{Notation}
$k$ will denote a field of characteristic 0.

By a \emph{variety over $k$} we mean a reduced, irreducible, separated schemes
of finite type over a field $k$. For a variety $X$ over $k$, the map $p_X$
denotes the structure map $p_X:X \rto \Spec k$.  When $X$ is clear from context
we omit it from the notation, and write $p$ for the structure map. Our chain
complexes are cohomological and often concentrated in negative degrees. Write
$\operatorname{hCoh}(X)$ for the derived category of coherent sheaves on $X$.

\section{Hochschild complex represents the $\bA^1$-Euler characteristic}
\label{Formalism}

We use the connection between Grothendieck duality and Hochschild homology to
give a bilinear form on the absolute Hochschild complex in this section. This form appears in greater generality in work of Alonso-Jeremías-Lipman \cite{atjll}. We then show this complex with duality represents the $\AA^1$-Euler characteristic for a smooth, projective scheme over a field of characteristic $0$, and use Bittner's presentation to give an exposition of the compactly supported $\AA^1$-Euler characteristic. Recall that we are working over a field of characteristic $0$.  

In future work we hope to use this formalism to lift the compactly supported Euler characteristic to a spectrum-level construction from J. Campbell and the fifth-named author's spectrum of varieties \cite{campbell} \cite{Z-kth-ass} landing in the Hermitian $K$-theory of $k$.  As working in the derived category is presumably not sufficient to make such a construction well-defined, we keep track (and include in our notation) which functors are defined outside the derived category, as well.  Thus, for example, given a map  $f:X \rto Y$ we think of $f^*$ as a functor $\Coh(Y) \rto \Coh(X)$ and denote  the derived functor by $\LL f^*: \operatorname{hCoh}(Y) \to \operatorname{hCoh}(X)$. 

Let $k$ be a field. We will shortly restrict to the case where $k$ is characteristic $0$.

	\begin{definition}
Let $p:X \to \Spec k$ be a smooth separated scheme over $k$, and let
$\Drcat(X)$ be the derived category of coherent sheaves on $X$.  Write
$\Delta: X \rto X\times X$ for the diagonal map, and define
$\Odelta_X \defeq \Delta_*\cO_X$.  This inherits a ring structure from
$\calO_X$; write $\mu_X: \Odelta_X \otimes \Odelta_X \rto \Odelta_X$ for the multiplication map.

Since $\Delta$ is affine, $\RR \Delta_* = \Delta_*$, so this is in fact a derived construction. Define the \emph{Hochschild complex $\HH^X(X)$} in $\Drcat(X)$ to be
\[
\HH^X(X) \defeq  (\LL\Delta^*) (\Delta_* \cO_X),
\]
which is equivalent to $\Odelta_X \otimes^{\LL}_{X\times X} \Odelta_X$. Since $\LL \Delta^*$ is strongly monoidal, the Hochschild complex inherits a multiplication 
\[
\LL \Delta^* \mu_X : \HH^X(X) \otimes \HH^X(X) \to \HH^X(X)
\]
  
We then define the \emph{absolute Hochschild complex $\HH(X)$} in $\Drcat(k)$ to be
\[
\HH(X) \defeq \RR p_*\HH^X(X).
\]
	\end{definition}

To see that the Hochschild homology is a good candidate to represent the $\AA^1$-Euler characteristic, we first note that it is a representative in $K_0(k)$ when the characteristic of $k$ is $0$:

	\begin{theorem}[Hochschild--Kostant--Rosenberg, {\cite{HKR62} \cite[p.1]{antieau_vezzosi}}]
Let $k$ be a field of characteristic $0$. For a smooth projective $X$, $[\HH(X)] = \Hdg(X/k)$ in $K_0(k)$.
	\end{theorem}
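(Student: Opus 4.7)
The plan is to reduce the statement to the classical Hochschild--Kostant--Rosenberg quasi-isomorphism and then chase degrees in the Grothendieck group.

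First, I would invoke the HKR quasi-isomorphism in the derived category of coherent sheaves on $X$, which in characteristic $0$ for smooth $X$ gives a decomposition
\[
\HH^X(X) \;=\; \LL\Delta^*(\Delta_* \cO_X) \;\simeq\; \bigoplus_{j=0}^{d} \Omega^{j}_{X/k}[j],
\]
where $d=\dim X$. The reference to Antieau--Vezzosi already cited in the statement covers exactly this point (via the classical statement of HKR for smooth algebras over a characteristic $0$ field, applied locally, plus the fact that the HKR map is a global map of sheaves of complexes).

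Next, I would apply $\RR p_*$ termwise. Since $\RR p_*$ commutes with finite direct sums and with shifts, this yields
\[
\HH(X) \;=\; \RR p_* \HH^X(X) \;\simeq\; \bigoplus_{j=0}^{d} \RR p_*\Omega^{j}_{X/k}[j].
\]
Because $X$ is smooth and projective over $k$, each $\RR p_*\Omega^{j}_{X/k}$ is a perfect complex of $k$-vector spaces whose cohomology in degree $i$ is precisely $H^i(X,\Omega^{j}_{X/k})$, and these groups are finite dimensional.

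Finally, I would compute in $K_0(k)$ using the fact that, for any perfect complex of $k$-vector spaces, the class equals the alternating sum of the classes of its cohomology. Thus
\[
[\HH(X)] \;=\; \sum_{j=0}^{d}\sum_{i=0}^{d} (-1)^{i-j}\,[H^i(X,\Omega^{j}_{X/k})] \;\in\; K_0(k),
\]
which matches the class of $\Hdg(X/k)=\bigoplus_{i,j} H^i(X,\Omega^{j}_{X/k})[j-i]$ in $K_0(k)$ by the convention $V[n]$ places $V$ in degree $-n$. Equality of classes then follows.

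The only real obstacle is the invocation of the sheafified HKR decomposition in characteristic $0$; everything else is bookkeeping with signs and degrees. I would not attempt to reprove HKR, but cite it (in the form appearing in Antieau--Vezzosi) and be careful that the resulting decomposition is only at the level of the derived category, which is exactly enough for the identity in $K_0(k)$ asserted by the theorem.
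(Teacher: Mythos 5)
Your argument is correct, and it is essentially the standard unpacking of the references the paper itself attaches to this statement: the paper gives no proof here, citing \cite{HKR62} and \cite[p.1]{antieau_vezzosi} for exactly the global HKR decomposition $\HH^X(X)\simeq\bigoplus_j\Omega^j_{X/k}[j]$ that you invoke, after which pushing forward along $\RR p_*$ and taking Euler characteristics is pure bookkeeping. Your sign check is consistent with the paper's convention that $V[i]$ sits in degree $-i$, so $H^i(X,\Omega^j)[j-i]$ contributes $(-1)^{i-j}$, matching your alternating sum. One remark worth making: the full direct-sum splitting in the derived category is more than you need. The paper's own HKR statement, Theorem~\ref{HnmuisKoszul}, only identifies the cohomology sheaves $\calH^*(\HH^X(X))\cong\T_X$ (as algebras), and that weaker input already suffices for the $K_0$ identity, since the hypercohomology spectral sequence $E_2^{i,j}=\RR^i p_*\calH^j(\HH^X(X))\Rightarrow H^{i+j}\HH(X)$ preserves Euler characteristics in $K_0(k)$ regardless of whether it degenerates. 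So if you wanted to avoid relying on the global splitting (which is a genuinely stronger fact, though true in characteristic $0$), you could run your computation on the $E_2$ page instead; either route is valid for the statement at the level of $K_0(k)$.
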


We would like the Euler characteristic to take values in $\GW(k)$, rather than $K_0(k)$. To accomplish this, we use the connection between Grothendieck duality and Hochschild homology found by Avramov and Iyengar \cite{avramov_iyengar}, and particularly the work of Neeman \cite{Neeman_GDHH} to construct a nondegenerate symmetric bilinear form on $\HH^X(X)$. This pairing appears in greater generality in Alonso-Jeremías-Lipman \cite{atjll}.

The multiplication on $\Odelta_X$ induces a multiplication
\begin{equation*}\label{def:mult}
\hat \mu_X:\HH(X)\otimes \HH(X) \rto \HH(X)
\end{equation*}
in the following manner. Define $\hat \mu_X$ to be the composition
\[
\HH(X) \otimes \HH(X) \rto \RR p_*(\HH^X(X)\otimes \HH^X(X)) \mathrel{\setlen{7em}{\rto^{(\RR p_*)(\LL\Delta_X^*)\mu_X}}} \HH(X),
\] 
where the first map is given by the lax monoidal structure on $\RR p_*$. Using this multiplication, $\HH(X)$ is equipped with the following canonical bilinear pairing.

	\begin{definition} \label{def:BX}
Let $X$ be proper, and let $p:X \rto \Spec k$ be the structure map.\footnote{Many of the constructions, and in particular the result of Neeman, are possible in much greater generality.}  Let $\pi_2: X\times X \rto X$ be the projection onto the second coordinate, as in the following commutative diagram:
\begin{diagram}
{ X \\
& X\times X & X \\ & X & \Spec k\\};
\to{1-1}{2-2}^\Delta \to{2-2}{3-2}_{\pi_2}
\to{2-2}{2-3}^{\pi_1}
\to{3-2}{3-3}^p \to{2-3}{3-3}^p
\end{diagram}
By \cite[Proposition 3.3]{Neeman_GDHH}, there is a canonical isomorphism 
\begin{equation*}
\label{Neeman_iso} \delta_X: (\LL\Delta^*) \pi_2^!\calO_X \to p^! \calO_k.
\end{equation*}
Inside the derived category, the adjoint to the weak equivalence $(\pi_2)_* \Odelta_X = (\pi_2)_* \Delta_* \cO_X \rwe (\pi_2\circ \Delta)_*\cO_X = \cO_X,$ is 
\[
t:\Odelta_X \to \pi_2^! \cO_X.
\] 

Define $\alpha'$ to be the composition of $\LL \Delta^*t$ with $\delta_X$ 
\[
\alpha': \HH^X(X) = (\LL \Delta^*) \Odelta_X \to (\LL \Delta^*) \pi_2^{!} \cO_X
  \rto^{\delta_X} p^!\cO_k.
\]
The map $\alpha'$ is a special case of the fundamental class of a flat map of Alonso-Jeremías-Lipman \cite[p. 390]{atjll}. Since $X$ is proper, there is a natural equivalence $p_! \rto p_*$; the map $\alpha$ adjoint to $\alpha'$ can thus be written as
\[
\alpha: \HH(X) \rto \cO_k.
\]
   
Define the bilinear form 
\[
B_X: \HH(X) \otimes \HH(X) \rto \calO_k \qquad\hbox{by} \qquad B_X \defeq \alpha \circ \hat \mu_X.
\]
	\end{definition}

Since $\mu_X$ is a commutative multiplication and $\RR p_*$ is lax symmetric monoidal, $\hat \mu_X$ is commutative; it follows that $B_X$ is symmetric.

	\begin{lemma}\label{lm:BXnondegenerate}
(Alonso-Jeremías-Lipman) $B_X$ is nondegenerate.
	\end{lemma}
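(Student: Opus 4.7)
The plan is to reduce the nondegeneracy of $B_X$ over $k$ to a statement on $X\times X$ that follows formally from Grothendieck--Serre duality applied to the diagonal. Throughout I will invoke two duality equivalences: for the proper map $p$, there is a natural equivalence $\RR\mathrm{Hom}_k(\RR p_*\mathcal{F},\mathcal{O}_k) \simeq \RR p_*\RR\mathcal{H}om_X(\mathcal{F}, p^!\mathcal{O}_k)$, and for the closed immersion $\Delta$, there is $\RR\mathcal{H}om_{X\times X}(\Delta_*\mathcal{G},\mathcal{H}) \simeq \Delta_*\RR\mathcal{H}om_X(\mathcal{G}, \Delta^!\mathcal{H})$.

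First, applying the former with $\mathcal{F} = \HH^X(X)$ identifies the adjoint of $B_X$ with $\RR p_*$ of the adjoint of the $X$-level pairing $\HH^X(X)\otimes_X\HH^X(X)\to p^!\mathcal{O}_k$ obtained as $\alpha'\circ\LL\Delta^*\mu_X$; so it suffices to show the upstairs adjoint $\HH^X(X)\to\RR\mathcal{H}om_X(\HH^X(X),p^!\mathcal{O}_k)$ is an equivalence in $\Drcat(X)$. Second, using Neeman's isomorphism $\delta_X$ together with the identity $\alpha' = \delta_X\circ\LL\Delta^*t$ and the symmetric monoidality of $\LL\Delta^*$, the upstairs pairing is $\LL\Delta^*$ applied to the $X\times X$-level pairing
\[
\Odelta_X\otimes^{\LL}_{X\times X}\Odelta_X\xrightarrow{\mu_X}\Odelta_X\xrightarrow{t}\pi_2^!\mathcal{O}_X.
\]
Since $\Odelta_X$ is perfect on $X\times X$ (as $\Delta$ is a regular closed immersion for $X$ smooth), $\LL\Delta^*$ commutes with $\RR\mathcal{H}om(\Odelta_X,-)$, so it suffices to show the adjoint $\Odelta_X\to\RR\mathcal{H}om_{X\times X}(\Odelta_X,\pi_2^!\mathcal{O}_X)$ of this $X\times X$-pairing is an equivalence.

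Finally, applying the second duality with $\mathcal{G}=\mathcal{O}_X$, $\mathcal{H}=\pi_2^!\mathcal{O}_X$, and using $\Delta^!\pi_2^!\mathcal{O}_X = (\pi_2\Delta)^!\mathcal{O}_X = \mathcal{O}_X$, there is a canonical equivalence $\Odelta_X\simeq\RR\mathcal{H}om_{X\times X}(\Odelta_X,\pi_2^!\mathcal{O}_X)$ sending the unit $1$ to the counit of $\Delta_*\dashv\Delta^!$ on $\pi_2^!\mathcal{O}_X$. One verifies $t$ equals this counit by observing that the composite adjunction $(\pi_2)_*\Delta_*\dashv\Delta^!\pi_2^!$ is the trivial identity adjunction on $\mathcal{O}_X$: the identity on $\mathcal{O}_X$ corresponds to $t$ under $(\pi_2)_*\dashv\pi_2^!$ and to the identity under $\Delta_*\dashv\Delta^!$. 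Since the adjoint of our pairing is $\Odelta_X$-linear by associativity of $\mu_X$ and sends $1\mapsto t$ by unitality, it coincides with the canonical equivalence and hence is itself an equivalence. The main obstacle is this last step---matching $t$ with the counit and verifying that the adjoint of the multiplicative pairing is the abstract Grothendieck--Serre isomorphism; once done, nondegeneracy of $B_X$ follows by backtracking through the reductions.
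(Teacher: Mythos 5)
Your proposal is correct and follows essentially the same route as the paper's proof: both reduce nondegeneracy over $k$ to the $X$-level statement via coherent duality for $p$, use Neeman's isomorphism $\delta_X$ to replace $p^!\mathcal{O}_k$ by $\LL\Delta^*\pi_2^!\mathcal{O}_X$, commute $\LL\Delta^*$ past $\RR\mathcal{H}om(\Odelta_X,-)$ using perfection of the diagonal, and conclude from duality for the closed immersion $\Delta$ together with $\Delta^!\pi_2^!\mathcal{O}_X\simeq\mathcal{O}_X$. The only difference is cosmetic: you spell out the identification of the multiplicative adjoint with the canonical duality equivalence (via $\Odelta_X$-linearity and $1\mapsto t$), which the paper leaves implicit in its chain of isomorphisms.
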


Lemma~\ref{lm:BXnondegenerate} is a special case of Alonso, Jeremías, and Lipman's \cite[Corollary 4.8.4]{atjll}. We provide a proof adapted to our level of generality to provide a self-contained exposition.

	\begin{proof}
The map $\alpha' \circ (\LL \Delta_X^*)\mu_X$ determines a map
\begin{equation}\label{adjointHHtoHomHHp!}
\HH^X(X) \to \RR \Hom(\HH^X(X), p^! \calO_k),
\end{equation} 
which we claim is an isomorphism. To see this, observe that \eqref{adjointHHtoHomHHp!} is equal to the composition
\[
\LL \Delta^* \Odelta_X \rto \RR \Hom(\LL \Delta^* \Odelta_X, \LL \Delta^*\pi_2^! \calO_X) \rto^{\delta_X\circ } \RR \Hom(\LL \Delta^* \Odelta_X, p^! \cO_k),
\]
where the first map is induced from the multiplication. Since $\delta_X$ is an isomorphism, the second is as well, so it remains to show that 
\begin{equation}\label{torhomadjmultHH^X}
\LL \Delta^* \Odelta_X \rto \RR \Hom(\LL \Delta^* \Odelta_X, \LL \Delta^*
  \pi_2^! \calO_X)
\end{equation}
is an isomorphism. Since $X$ is smooth, $\Delta$ is perfect \cite[Section 37.54, Lemma 37.54.18.]{stacksproject}, and thus has finite Tor-dimension \cite[Section 0685 Lemma 37.53.11]{stacksproject}. Since $\Delta$ is proper and
finite Tor-dimension,
\[
\LL \Delta^* \RR \Hom(\Odelta_X,\pi_2^!\cO_X) \simeq \RR \Hom(\LL \Delta^* \Odelta_X, \pi_2^! \cO_X)
\]
by \cite[Lemma 4.3]{Neeman_GDHH}, and the morphism \eqref{torhomadjmultHH^X} is induced by $\LL \Delta^*$ applied to
\[
\Odelta_X \rto \RR \Hom(\Odelta_X, \pi_2^! \calO_X).
\]
This is an isomorphism, since it equal to the composition
\[
\Delta_* \cO_X \cong \Delta_* \RR\Hom(\cO_X,\cO_X) \cong \Delta_* \Hom(\cO_X,\Delta^! \pi_2^! \cO_X) \cong \Hom(\Delta_*\cO_X, \pi_2^! \cO_X),
\]
where the third isomorphism is by \cite[(2.5.2)]{Neeman_GDHH} and the fourth is
by \cite[p.14]{Neeman_GDHH}.
  
By coherent duality \cite[p. 7 Ideal Theorem c]{HartshorneRD}, the composition of the natural map 
\[
\RR \Hom_X(\HH (X), p^!\calO_k) \to \RR\Hom_Y(\RR p_* \HH(X), \RR p_* p^! \calO_k)
\] 
with the map induced by the trace $\RR p_* p^! \to 1$ is isomorphism 
\[
\RR p_*\RR \Hom_X(\HH(X),p^!\calO_k) \to^{\cong} \RR\Hom_k (\HH(X),\calO_k).
\] 
Since \eqref{adjointHHtoHomHHp!} is an isomorphism, it follows that the map $\HH(X) \to \RR\Hom_k (\HH(X),\calO_k)$ induced by $\alpha \circ \hat \mu_X \circ \zeta_X$ is an isomorphism.
	\end{proof}

In order to show that $(\HH(X), B_X)$ gives the $\bA^1$-Euler characteristic, we will use that the algebra structure on $\HH(X)$ is compatible with the algebra structure on differential forms: let $\T_X$ denote the Koszul complex as in Definition \ref{koszuleuler}.

	\begin{tm}[Hochschild--Kostant--Rosenberg Theorem]\label{HnmuisKoszul}
Let $k$ be a field of characteristic $0$. Giving $\calH^*(\HH^X(X))$ the algebra structure induced from $\LL \Delta^* \mu_X$ and $\T_X$ the algebra structure from wedge product of forms, there is an algebra isomorphism $\calH^*(\HH^X(X)) \to \T_X$.
 	\end{tm}

See also \cite[Corollaire 1.2]{Toen_Vezzosi11}, identifying the relevant $\Tor$
in commutative differential graded algebras.

	\begin{proof}
Let $I$ denote the ideal sheaf of the closed immersion $\Delta$. There is an exact sequence 
\[
0 \to I \to \calO_{X \times X} \to \Delta_* \calO_X \to 0 
\]
of coherent sheaves on $X\times X$. Applying $\LL \Delta^*$, we obtain the distinguished triangle 
\[
\LL \Delta^* I \to \LL \Delta^* \calO_{X \times X} \to \HH^X (X) \to  \LL \Delta^* I[1].
\]
The associated long exact sequence on homology sheaves defines a map \begin{equation}\label{calH1HHtoK1}
\calH\calH_1^X(X) \cong \calH^1(\HH^X(X)) \to \calH^0 \LL \Delta^* I \cong \Delta^* I \cong I/I^2 \cong \Omega_{X/k},
\end{equation}
where $\calH\calH_1$ denotes the first Hochschild homology and $\calH^1(\HH^X(X))$ denotes the first homology sheaf of the complex $\HH^X(X)$ (which is denoted with a superscript $1$ because our complexes are graded cohomologically).  $\calH^1(\LL \Delta^* \calO_{X \times X}) = 0$ because $\calO_{X \times X}$ is a flat $\calO_{X \times X}$-module.  Thus \eqref{calH1HHtoK1} is injective. Moreover the map $\calO_X \cong \Delta^* \calO_{X \times X} \to \Delta^* \Delta_*\calO_X \cong
 \calO_X$ is the identity. We therefore have that \eqref{calH1HHtoK1} is an isomorphism.

The map $\LL \Delta^* \mu_X$ induces a multiplication on $\calH (\HH^X(X))$ compatible with the multiplication an open affines of \cite{HKR62}. By \cite[Theorem 3.1]{HKR62}, $\calH (\HH^X(X))$ is the exterior algebra on $\calH^1 (\HH^X(X))$ after restricting to affine open subsets of $X$. Thus $\calH (\HH^X(X))$ is the exterior algebra on $\calH^1 (\HH^X(X))$. Therefore, \eqref{calH1HHtoK1} extends to an algebra homomorphism $\calH^*(\HH^X(X)) \to \T_X$. Since $\T_X$ is the exterior algebra on $\Omega_{X/k}$, the homomorphism $\calH^*(\HH^X(X)) \to \T_X$ is an isomorphism.
	\end{proof}

We now show that the $\bA^1$-Euler characteristic is represented by the form on $\HH(X)$ just constructed. Given a bilinear form on a graded vector space
$V^{\bullet} \times V^{\bullet} \to k$, the $0$-th graded piece $V^0$, and the
direct sums $V^{(n)} := V^{-n} + V^n$ for $n > 0$ carry a nondegenerate bilinear form. We
denote the corresponding elements of $\GW(k)$ by $[V^0]$ and $[V^{(n)}]$
respectively. The class in $\GW(k)$ determined by the form is the alternating sum
$[V^0] + \sum_{n>0} (-1)^n [V^{(n)}]$.

\begin{definition} \label{GWassociated_to_bilinear_form_on_complex}
Given a nondegenerate, symmetric $\beta: V^{\bullet} \times V^{\bullet} \to \calO_k$ in $\Drcat(k)$, let $[\beta]$ denote the class in $\GW(k)$ given by $[H^0(V^{\bullet})] + \sum_{i>0} (-1)^i [H^{\ast}(V^{\bullet})^{(i)}]$.
\end{definition}

In the following theorem, we prove that $[B_X] = \chi^{\bA^1}(X)$ by identifying the former with the pairing on $(\Hdg(X/k),\Tr)$. It is proved analogously to the proof of \cite[Proposition 2.4]{bachmann_wickelgren}. However, we also need a key duality result, which gives us a better understand of the constituents used to define $B_X$.

	\begin{tm}[{\cite[Proposition 2.4.2]{atjll}}{\cite[Theorem 3.5]{Neeman_GDHH}},{\cite[Proposition 4.6.3]{lipman1987residues}}] \label{Hnalpha'iso}
Let $d = \dim X$, and let $\alpha': \HH^X(X) \to p^!\cO_k$ be as defined above. Then the map $\mathcal H^d(\alpha')$ induced on $d$-th cohomology sheaves over $X$ is an isomorphism.
	\end{tm}

With this result, we have the following:

	\begin{theorem}\label{thm:B_XisChiA1_Xsmpr}
Let $X$ be smooth and projective over a field $k$ of characteristic $0$. Then the class of $B_X$ in $\GW(k)$ is the categorical $\bA^1$-Euler characteristic $\chi^{\bA^1}(X)$ of $X$. When $X$ is smooth and proper, the class of $B_X$ is the Euler number of the tangent bundle.
	\end{theorem}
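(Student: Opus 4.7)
The plan is to reduce the statement to a comparison with the already-established representative $(\Hdg(X/k),\Tr)$, and then invoke Theorem \ref{koszulworks} together with Proposition \ref{koszulagrees}. Concretely, I will show that the pairing $(\HH(X), B_X)$ is isomorphic in $\GW(k)$ to $(\RR p_*\T_X, \RR p_* \beta_X)$, after which the chain of equalities $[B_X] = [\RR p_* \T_X, \RR p_* \beta_X] = (\Hdg(X/k),\Tr) = \chi^{\bA^1}(X)$ gives the result.

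The first step is to upgrade the Hochschild--Kostant--Rosenberg identification of Theorem~\ref{HnmuisKoszul} from an algebra isomorphism on cohomology sheaves to a quasi-isomorphism $\HH^X(X) \simeq \T_X$ in $\Drcat(X)$ under which $\LL\Delta^*\mu_X$ is carried to the wedge product on $\T_X$. In characteristic $0$ this formality is automatic because the cohomology sheaves of $\HH^X(X)$ are locally free and concentrated in finitely many degrees, and one can build the quasi-isomorphism locally using the explicit HKR maps already exhibited in the proof of Theorem~\ref{HnmuisKoszul}. Once this is in hand, the multiplication $\hat\mu_X$ on $\HH(X)$ is identified under $\RR p_*$ with the pushforward of wedge product on $\T_X$.

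The second step, which is the main obstacle, is to identify the counit $\alpha'\colon \HH^X(X)\to p^!\cO_k$ with the natural projection $\T_X\to \wedge^d T^*_X[d]$ onto the top exterior power. Both the target $p^!\cO_k = \omega_{X/k}[d]$ and the top cohomology sheaf $\mathcal{H}^{-d}(\T_X)=\wedge^d T^*_X$ live in cohomological degree $-d$, so under the HKR identification $\alpha'$ factors through this top piece for degree reasons. Theorem~\ref{Hnalpha'iso} (Neeman) tells us that the resulting map $\wedge^d T^*_X\to \omega_{X/k}$ is an isomorphism of line bundles. As in the proof of \cite[Proposition 2.4]{bachmann_wickelgren}, one checks locally on affine opens, using the Koszul resolution of the diagonal and the compatibility of Neeman's $\delta_X$ with the residue description of $p^!\cO_k$, that this isomorphism is the tautological one; any residual unit scalar can be absorbed, as in \emph{loc.\ cit.}, by observing that rescaling the top pairing by a unit on $X$ yields a congruent bilinear form on $\RR p_*\T_X$ and hence the same class in $\GW(k)$.

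Combining the two steps, the composite $\alpha'\circ \LL\Delta^*\mu_X$ is identified with $\beta_X$ on $\T_X$. Applying $\RR p_*$ and using the Grothendieck--Serre duality trace to pass from $p^!\cO_k$ to $\cO_k$, exactly as in the definition of $\alpha$, the form $B_X$ is identified with $\RR p_*\beta_X$ on $\RR p_*\T_X$, yielding $[B_X] = [\RR p_* \T_X,\RR p_*\beta_X]$ in $\GW(k)$. Proposition~\ref{koszulagrees} then gives $[\RR p_*\T_X,\RR p_*\beta_X] = (\Hdg(X/k),\Tr)$, and Theorem~\ref{koszulworks} identifies this with $\chi^{\bA^1}(X)$. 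For the final sentence, under the Poincar\'e--Hopf theorem for smooth and proper schemes in $\bA^1$-homotopy theory (see for instance \cite{hoyois2015quadratic} and \cite{Levine-EC}), $\chi^{\bA^1}(X)$ equals the $\bA^1$-Euler number $n(T_X)$ of the tangent bundle; combined with the identification above, this gives $[B_X]=n(T_X)$ and completes the proof.
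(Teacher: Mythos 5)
Your overall strategy---identify $(\HH^X(X),\LL\Delta^*\mu_X,\alpha')$ with $(\T_X,\wedge,\mathrm{proj})$ at the level of the derived category and then quote Proposition~\ref{koszulagrees} and Theorem~\ref{koszulworks}---is a reasonable target, but the first step as you justify it contains a genuine gap. A bounded complex of sheaves whose cohomology sheaves are locally free is \emph{not} automatically formal: the successive extensions are classified by classes in $\operatorname{Ext}^{\geq 2}$ between the cohomology sheaves, which are typically nonzero on a projective variety, and local quasi-isomorphisms do not glue to a global one in a triangulated category. So the existence of a global, \emph{multiplicative} equivalence $\HH^X(X)\simeq\T_X$ in $\Drcat(X)$ carrying $\LL\Delta^*\mu_X$ to the wedge product is a real theorem (a derived-category strengthening of Theorem~\ref{HnmuisKoszul}), not a formal consequence of the sheaf-level HKR isomorphism, and your argument does not supply it. This is exactly the difficulty the paper's proof is engineered to avoid: it runs the multiplicative hypercohomology spectral sequence $E_2^{i,j}=\RR^ip_*\calH^j(\HH^X(X))\Rightarrow H^{i+j}\HH(X)$, uses only cohomology-sheaf-level input (Theorem~\ref{HnmuisKoszul} to identify the $E_2$-page with $\Hdg(X/k)$ and Theorem~\ref{Hnalpha'iso} to identify the induced pairing $\beta_2$ with the cup-product/trace pairing), and then invokes \cite[Lemmas 2.6 and 2.7]{bachmann_wickelgren} to conclude that the class in $\GW(k)$ is preserved from $E_2$ to $E_\infty$ and that $[\beta_\infty]=[B_X]$. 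No derived-level equivalence is needed there.

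A second, smaller problem: you claim that a residual global unit $u$ in the comparison of $\alpha'$ with the projection onto $\wedge^dT^*_X[d]$ can be absorbed because rescaling the top pairing by a unit yields a congruent form. That is false in $\GW(k)$ in general: rescaling a nondegenerate symmetric form $q$ by $u$ gives $\langle u\rangle\cdot q$, which differs from $q$ unless $u$ is a square (for instance $\langle 2\rangle\neq\langle 1\rangle$ over $\Q$). To follow your route you would need to pin down the comparison isomorphism on the nose, which is the content of Neeman's Theorem~\ref{Hnalpha'iso} together with the canonical identification $\omega_{X/k}=\wedge^dT^*_X$, not an after-the-fact rescaling. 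The final reduction to Theorem~\ref{koszulworks} and the Euler-number statement are fine and essentially match the paper's citations.
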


	\begin{remark}\label{rmk:Euler_class_tangent_bundle_categorical_Euler_characteristic}
The Euler class of a vector bundle was defined by \cite{BargeMorel} and developed further by J. Fasel \cite{FaselGroupesCW} and others. Under orientation conditions on the vector bundle which are always satisfied by the tangent bundle, the Euler class can be pushed forward to an Euler number. It is a theorem of M. Levine that for $X$ smooth and projective, the categorical $\bA^1$-Euler characteristic equals the Euler number of the tangent bundle \cite[Theorem 3.1 or ArXiv version 3 Theorem 1.1]{Levine-EC}, and in particular, $X$ is dualizable in an appropriate sense, see loc. cit.
	\end{remark}

	\begin{proof}
Let $n=\dim X$. There is a hypercohomology spectral sequence $$E_2^{i,j} = \RR^ip_* H^j \HH^X(X) \Rto H^{i+j} \HH(X).$$ By the isomorphism $H^j \HH^X(X) \cong \Omega^{-j}_X$ of the Hochschild--Kostant--Rosenberg theorem,  $E_2^{i,j} \cong H^i(X,\Omega^{-j}_X)$. The hypercohomology spectral sequence is multiplicative, giving a bilinear form on the $E_r$-pages for $r\geq 2$ coming from the cup product
\[
E_2^{i,j} \times E_2^{i',j'} \to \RR^{i+i'}p_* H^{j+j'}(\HH^X(X) \otimes \HH^X(X)).
\] 
Composing with $\alpha \circ (\RR p_*(\hat \mu)) \circ \zeta_X$, we obtain a new form on the $E_r$-page which we denote $\beta_r$
\[
\beta_r: E_r^{i,j} \times E_r^{n-i,n-j} \to k.
\]
Note that $E_2^{i,j}$ is the degree $(i,j)$ summand of $(\Hdg(X/k),\Tr)$, as defined in (\ref{eq:Hdg}). By Theorems \ref{Hnalpha'iso} and \ref{HnmuisKoszul}, $\beta_2$ is the cup product pairing, whence $[\beta_2] = [(\Hdg(X/k),\Tr)]$. Recall that the notation $[\beta_2]$ was defined in \ref{GWassociated_to_bilinear_form_on_complex}.  By \cite[Lemma 2.7]{bachmann_wickelgren}, it follows that $\beta_r$ is symmetric, non-degenerate and $[\beta_r]=[\beta_2]$ for all $r \geq 2$, including $r=\infty$. By \cite[Lemma 2.6]{bachmann_wickelgren}, $[\beta_{\infty}] = B_X$. For $X$ smooth and projective $ [(\Hdg(X/k),\Tr)] = \chi^{\AA^1}(X)$ by \cite[Theorem 1.3]{levine_raksit}.  By \cite[Theorem 1.1,second Corollary p. 3]{bachmann_wickelgren} $ [(\Hdg(X/k),\Tr)]$ is the Euler number of the tangent bundle. Combining, we see $[\beta_2] = [\beta_r] = B_X$, which completes the proof by the previous. 
	\end{proof}

Using Bittner's presentation, we now show that $[X] \rgoesto [B_X]$ defines a homomorphism out of the Grothendieck ring of varieties, and which agrees with the usual Euler characteristic.  In future work we hope to prove this theorem using scdh-descent and Hochschild homology. It is interesting to note that, by contrast, Hochschild homology does not satisfy cdh-descent. The below proof is similar to one of O. R\"ondigs in that it uses cut and paste properties of the categorical Euler characteristic \cite[Theorem 5.2]{rondigs} \cite[3 Remark 2.30]{morelvoevodsky1998}. The below is of a more computational flavor, relying on recent work of M. Levine. 

	\begin{theorem}\label{chi_c_cut_and_paste_char_0}
Let $k$ be a field of characteristic $0$. The map $X \rgoesto [B_X]$ for $X$ smooth and projective defines a homomorphism $K_0(\Var_k) \rto \GW(k)$ which agrees with the categorical $\AA^1$-Euler characteristic.
	\end{theorem}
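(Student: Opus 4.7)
The strategy is to combine Bittner's presentation (Theorem \ref{bittnerpres}) with the identification $[B_X] = \chi^{\bA^1}(X)$ from Theorem \ref{thm:B_XisChiA1_Xsmpr}. That identification immediately gives the "agrees with the categorical $\bA^1$-Euler characteristic" clause, so the real content is well-definedness: I need to check that the assignment $X \mapsto \chi^{\bA^1}(X)$, defined on smooth projective varieties, respects the blow-up relation of Bittner's presentation. That is, for a smooth closed subvariety $Y \subseteq X$ with $X$ smooth projective and exceptional divisor $E \subseteq \Bl_Y X$, I need to verify
\[
  \chi^{\bA^1}(\Bl_Y X) - \chi^{\bA^1}(E) = \chi^{\bA^1}(X) - \chi^{\bA^1}(Y) \quad \text{in } \GW(k).
\]

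The first ingredient is the Morel--Voevodsky motivic blow-up formula \cite[\S3, Remark 2.30]{morelvoevodsky1998}, which yields a natural equivalence of motivic quotients
\[
  \Sigma^{\infty}_T (\Bl_Y X)/\Sigma^{\infty}_T E \;\simeq\; \Sigma^{\infty}_T X/\Sigma^{\infty}_T Y
\]
in $SH(k)$. This packages the two blow-up inclusions into a single object and produces a pair of compatible cofiber sequences with a common cofiber. The second ingredient is the additivity of the categorical Euler characteristic along cofiber sequences of dualizable objects, as used by R\"ondigs \cite[Theorem 5.2]{rondigs} and developed in Levine's work \cite{Levine-EC}: for a cofiber sequence $A \to B \to C$ of dualizable objects in $SH(k)$, one has $\chi^{\bA^1}(B) = \chi^{\bA^1}(A) + \chi^{\bA^1}(C)$ in $\End_{SH(k)}(1_k) \cong \GW(k)$.

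To assemble the proof, I would first note that each of $X$, $Y$, $\Bl_Y X$, and $E$ is smooth and projective, hence their motivic suspension spectra are dualizable by the references in Definition~\ref{abstracteuler}; since the common cofiber sits in a cofiber sequence with dualizable endpoints, it too is dualizable. Applying additivity to the sequences
\[
  \Sigma^{\infty}_T E_+ \to \Sigma^{\infty}_T(\Bl_Y X)_+ \to \Sigma^{\infty}_T(\Bl_Y X)/E, \qquad
  \Sigma^{\infty}_T Y_+ \to \Sigma^{\infty}_T X_+ \to \Sigma^{\infty}_T X/Y,
\]
and subtracting the resulting identities yields the required blow-up relation, upon cancelling the equal cofiber contributions via the blow-up formula. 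Finally, by Theorem~\ref{thm:B_XisChiA1_Xsmpr} we replace $\chi^{\bA^1}$ by $[B_{(-)}]$ throughout, which both verifies Bittner's relation for $[B_{(-)}]$ and shows that the extended homomorphism $K_0(\Var_k) \to \GW(k)$ coincides with $\chi^{\bA^1}$ on the generators.

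The main obstacle is the clean invocation of additivity in the two motivic cofiber sequences: I have to know that additivity of $\chi^{\bA^1}$ applies to any cofiber sequence of dualizable objects in $SH(k)$, not merely to those arising from smooth projective varieties (since the shared cofiber $\Sigma^{\infty}_T X/Y$ is not itself the suspension spectrum of a smooth projective variety). Once this is granted via Levine's work and the R\"ondigs cut-and-paste statement, the argument reduces to bookkeeping in $\GW(k)$.
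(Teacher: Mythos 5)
Your proof is correct, but it takes a genuinely different route from the paper's. The paper explicitly flags your route as the one R\"ondigs follows (\cite[Theorem 5.2]{rondigs}, via \cite[3 Remark 2.30]{morelvoevodsky1998}) and deliberately opts for something ``of a more computational flavor'': it quotes Levine's closed formulas $\chi^{\AA^1}(\Bl_Y X) = \chi^{\AA^1}(X) + \sum_{i=1}^{c-1}\langle -1\rangle^i \chi^{\AA^1}(Y)$ and $\chi^{\AA^1}(E) = \chi^{\AA^1}(\mathbb{P}N_YX) = \sum_{i=0}^{c-1}\langle -1\rangle^i \chi^{\AA^1}(Y)$ from \cite{Levine-EC}, and observes that their difference is exactly $\chi^{\AA^1}(Y)$, which yields Bittner's relation by arithmetic in $\GW(k)$ with no cofiber sequences at all. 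Your argument instead uses the Morel--Voevodsky equivalence $\Sigma^\infty_T(\Bl_YX)/E \simeq \Sigma^\infty_T X/Y$ together with additivity of the categorical Euler characteristic along distinguished triangles of dualizable objects. You correctly identify the crux: that additivity is \emph{not} formal in a triangulated category --- it is May's additivity-of-traces theorem (or its motivic incarnation in Levine's and Hoyois's work) --- and your proof stands or falls on citing it properly; you do, so the argument is complete. The trade-off is real: your route is more structural, avoids all codimension bookkeeping, and makes the cancellation of the common cofiber transparent, but it imports the heavier additivity machinery; the paper's route only needs two quotable formulas (themselves proven by Levine using similar inputs) and keeps everything at the level of explicit $\GW(k)$ arithmetic, which matches the computational spirit of its Section 3. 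Either way, the reduction of the ``agrees with $\chi^{\AA^1}$'' clause to Theorem \ref{thm:B_XisChiA1_Xsmpr} is handled identically in both arguments.
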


	\begin{proof}
That $[B_X]$ is the categorical $\AA^1$-Euler characterstic $\chi^{\AA^1}(X)$ is Theorem \ref{thm:B_XisChiA1_Xsmpr}. (Also see Remark \ref{rmk:Euler_class_tangent_bundle_categorical_Euler_characteristic} for context.)

Let 
\begin{equation*}
\begin{tikzcd}[column sep=large, row sep=large]
E \arrow[r]  &  \Bl_{Y} X \arrow[d] \\
Y  \arrow[r] \arrow[u,leftarrow]& X,
\end{tikzcd}
\end{equation*} 
be the blow-up diagram of a smooth, proper $k$-variety $X$ along
a smooth closed subvariety $Y$ of codimension $c$. It follows from
\cite[ Proposition 1.4 (5) or ArXiv version 3 Proposition 1.10 (4)]{Levine-EC} that $\chi^{\AA^1}( \Bl_{Y} X) = \chi^{\AA^1}(X) + \sum_{i=1}^{c-1} \langle -1 \rangle^i \chia (Y)$. The exceptional divisor $E$ is the projectivization $\mathbb{P} N_Y X$ of the normal bundle of $Y$ in $X$. By \cite[Proposition 1.4 (4) or ArXiv version 3 Proposition 1.10 (3)]{Levine-EC}, $\chi^{\AA^1} E = \sum_{i=0}^{c-1} \langle -1 \rangle^i \chia (Y)$. Thus 
\[
\chi^{\AA^1}( \Bl_{Y} X) =  \chi^{\AA^1}(X)-\chi^{\AA^1}(Y)+\chi^{\AA^1}(E)
\]
and the homomorphism is well-defined by Bittner's presentation (see Theorem~\ref{bittnerpres}).
	\end{proof}

\section{A direct perspective}\label{Example} 
In this section, we compute $\chi^{\AA^1}(X) = \chi_c^{\AA^1}(X)$ for the smooth, proper varieties $X = \Spec k$, $\PP^1$, $\PP^2$, and $ \Bl_{0}\PP^{2}$
using the complex with duality $(\Hdg(X/k), \Tr)$ described in Definition \ref{defHdg}. In other words, we compute $\AA^1$-Euler characteristics
explicitly using coherent duality. This allows a direct verification that the
association of the complex with duality $(\Hdg(X/k), \Tr)$ to a smooth
projective variety $X$ satisfies the relation $[(\Hdg(\PP^2/k), \Tr)] +
[(\Hdg(\PP^1/k), \Tr)] = [(\Hdg(\Bl_0\PP^2/k), \Tr)] + [(\Hdg(k/k), \Tr)] $ in
$\GW(k)$ imposed on classes of smooth projective varieties in Bittner's
presentation (see \ref{bittnerpres}). It gives an alternate proof of the $n=1,2$
case of M. Hoyois's computation of $\chi^{\AA^1}(\PP^n)$ \cite[Example
1.7]{hoyois2015quadratic}, and an alternate proof of the case $X=
\Bl_{0}\PP^{2}$ of M. Levine's computation of the $\AA^1$-Euler characteristic
of a blow up \cite[Proposition 1.4 or ArXiv version 3 Proposition 1.10]{Levine-EC}.  As before, we work over a
field of characteristic $0$.

The collapse of appropriate spectral sequences renders $(\Hdg(X/k), \Tr)$ equivalent to the complex with duality $(\RR{p_X}_* \T_X, \RR{p_X}_* \beta_X)$ described in \eqref{equ:p_*Beta_XKoszul}. Let $p_X: X \to \Spec k$ denote the structure map of $X$; recall that
\[
\T_X\defeq \oplus^0_{i=-d}(\wedge^{-i} T^*_X)
\]
is equipped with the natural duality given by composing multiplication of forms with projection off of the top wedge power of $T^*_X$. After pushforward to $\Spec k$, this complex with duality represents $\chi^{\bA^1}$ for smooth projective varieties, by \cite[Proposition 2.4]{bachmann_wickelgren}.

Consider the blow-up square
\begin{equation}\label{bus}
\begin{tikzcd}[column sep=large, row sep=large]
\PP^{1} \arrow[r,"i'"]  &  \Bl_{0}\PP^{2} \arrow[d,"f"] \\
\Spec k  \arrow[r,"i"] \arrow[u,leftarrow,"f'"]& \PP^{2}.
\end{tikzcd}
\end{equation}
The goal is to show the equality
\begin{equation}\label{pbus}
[\RR{p_{\PP^1}}_* \T_{\PP^1}] + [\RR{p_{\PP^2}}_* \T_{\PP^2}] = [\RR{p_{\Spec k}}_*\T_{\Spec k}] + [\RR{p_{\Bl_0 \PP^2}}_* \T_{\Bl_0\PP^2}]
\end{equation} 
$\GW(k)$; we have omitted the dualities from the notation for legibility, but these too must be considered.

	\begin{remark} 
In this section we write $p$ for the map $p_X:X \rto \Spec k$ whenever $X$ is clear from context. 
	\end{remark}

\subsection{Verification in $K_0(k)$}
In this subsection, $k$ is any field. As a $1$-category, $\operatorname{hCoh}(k)$ is equivalent to the category of graded $k$-vector spaces by taking homology. We now compute $\RR p_*\T_{\Spec k}$, $\RR p_*\T_{\PP^1}$, $\RR p_*\T_{\PP^2} $, and $\RR p_* \T_{\Bl_0 \PP^2}$ as $k$-vector spaces using the hypercohomology spectral sequence \cite[Theorem 12.12]{mccleary01}
\begin{equation}\label{SS}
E^{i,j}_{2}=\Hy^{i}\bigl(X,\T_{X}^{j}\bigr)=\Hy^{i}\bigl(X,\wedge^{j}T^{*}_{X}\bigr)  \quad \Rto \quad \Hy^{j-i}\bigl((p_{X})_{*}\T_{X}\bigr).
\end{equation}
Along the way, we verify \eqref{pbus} in $K_0(k)$. 

Consider (\ref{pbus}). The complex $\RR p_{*}\T_{\Spec k}$ is $k$ in degree zero and trivial otherwise.

The computations of $\RR p_*\T_{\PP^1}$, $\RR p_*\T_{\PP^2} $, and $\RR p_* \T_{\Bl_0 \PP^2}$ as graded $k$-vector spaces are done in Propositions~\ref{P1k2}, \ref{P2k3}, and \ref{Bk4}, respectively; we compute the forms in Subsection \ref{forms}.  We will not explicitly discuss differentials in the spectral sequence, as each of the examples we compute will end up concentrated in a single diagonal, and therefore no differentials are possible.

	\begin{proposition}\label{P1k2}
$\RR p_*\T_{\PP^1} \cong k^2$ where $k^{2}$ is concentrated in degree $0$.
	\end{proposition}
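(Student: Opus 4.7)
The plan is to apply the hypercohomology spectral sequence \eqref{SS} at $X = \PP^1$. Since $\T_{\PP^1}$ consists of only two terms, $\cO_{\PP^1}$ in position $0$ and $\Omega^1_{\PP^1}$ in position $-1$, with zero differential, the $E_2$-page has at most two nonzero columns, namely $j = 0$ and $j = 1$.

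First I would record the standard coherent cohomology on $\PP^1$: one has $\Hy^0(\PP^1,\cO_{\PP^1}) = k$ and $\Hy^1(\PP^1,\cO_{\PP^1}) = 0$, while $\Omega^1_{\PP^1} \cong \cO_{\PP^1}(-2)$ gives $\Hy^0(\PP^1,\Omega^1_{\PP^1}) = 0$ and $\Hy^1(\PP^1,\Omega^1_{\PP^1}) = k$ (either from the Euler sequence, or by Serre duality applied to the first calculation). Plugging these into \eqref{SS}, the only nonzero $E_2$ terms are $E_2^{0,0} = k$ and $E_2^{1,1} = k$, and both lie on the antidiagonal $j - i = 0$.

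Because only two entries survive and they live in the same total degree, no differential $d_r$ for $r \geq 2$ can have both a nonzero source and a nonzero target, so the spectral sequence collapses at $E_2$. Summing the surviving antidiagonal contributions then yields $\Hy^{0}(\RR p_*\T_{\PP^1}) \cong k \oplus k$ with all other cohomology zero, which is the claim. There is no substantive obstacle: the argument reduces to a two-entry coherent cohomology calculation on $\PP^1$ and the trivial vanishing of higher differentials for degree reasons, exactly as anticipated in the paragraph preceding the proposition.
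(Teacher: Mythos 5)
Your proposal is correct and follows essentially the same route as the paper: identify $\Omega^1_{\PP^1}\cong\sh{O}(-2)$, compute the four cohomology groups (the paper uses Serre duality for the nonobvious ones), and observe that the two surviving $E_2$-terms $E_2^{0,0}$ and $E_2^{1,1}$ lie in the same total degree, forcing collapse. Nothing further is needed.
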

	\begin{proof}
Consider the $E_2$-page of \eqref{SS} when $X = \PP^1$, illustrated in Figure \ref{ssP1}. Noting that $T^{*}_{\PP^{1}} \cong \sh{O}(-2)$, Serre duality implies that
\[
\Hy^{1}\bigl(\PP^{1},\sh{O}_{\PP^{1}}\bigr)=\Hy^{0}\bigl(\PP^{1},\sh{O}(-2)\bigr)=0
\]
and
\[
\Hy^{1}\bigl(\PP^{1},\sh{O}(-2)\bigr)= \Hy^{0}\bigl(\PP^{1},\sh{O}_{\PP^{1}}\bigr)=k.
\]
Thus the spectral sequence is as illustrated in Figure \ref{ssP1r}.
	\end{proof}

\begin{figure}[h!]
\begin{minipage}[b]{0.45\linewidth}
\centering
\begin{tikzpicture}[scale=1]
\draw[-stealth] (0.5,0.5) -- (8,0.5);
\draw[-stealth] (0.5,0.5) -- (0.5,4);
\draw[black!90]  
         (2.1,0.3) node[below] {\tiny{$0$}}
         (5,0.3) node[below] {\tiny{$1$}}
         (7,0.3) node[below] {\tiny{$2$}}
         (0.3,1) node[left] {\tiny{$0$}}
         (0.3,2) node[left] {\tiny{$1$}}
         (0.3,3) node[left] {\tiny{$2$}};
\draw (8,0.5) node[below] {$i$}
         (0.5,4) node[left] {$j$};
\draw[very thick] 
(2.1,1) node {$\Hy^{0}\bigl(\PP^{1},\sh{O}_{\PP^{1}}\bigr)$}
(2.1,2) node {$\Hy^{0}\bigl(\PP^{1},\sh{O}(-2)\bigr)$}
(5,1) node {$\Hy^{1}\bigl(\PP^{1},\sh{O}_{\PP^{1}}\bigr)$}
(5,2) node {$\Hy^{1}\bigl(\PP^{1},\sh{O}(-2)\bigr)$};
\draw[black!90,very thick] 
                                (2.1,3) node {\large$0$}
                                (5,3) node {\large$0$}
                                (7,1) node {\large$0$}
                                (7,2) node {\large$0$}
                                (7,3) node {\large$0$};                                                                
\end{tikzpicture}
\caption{\eqref{SS} for $X = \PP^1$}
\label{ssP1}
\end{minipage}
\hspace{1.4cm}
\begin{minipage}[b]{0.45\linewidth}
\centering
\begin{tikzpicture}[scale=1]
\draw[-stealth] (0.5,0.5) -- (4,0.5);
\draw[-stealth] (0.5,0.5) -- (0.5,4);
\draw[black!90]  
         (1,0.3) node[below] {\tiny{$0$}}
         (2,0.3) node[below] {\tiny{$1$}}
         (3,0.3) node[below] {\tiny{$2$}}
         (0.3,1) node[left] {\tiny{$0$}}
         (0.3,2) node[left] {\tiny{$1$}}
         (0.3,3) node[left] {\tiny{$2$}};
\draw (4,0.5) node[below] {$i$}
         (0.5,4) node[left] {$j$}; 

\draw[very thick] (1,1) node {\large$k$}
                                          (2,2) node {\large$k$};
\draw[black!90,very thick] 
                                (1,2) node {\large$0$}
                                (1,3) node {\large$0$}
                                (2,1) node {\large$0$}
                                (2,3) node {\large$0$}
                                (3,1) node {\large$0$}
                                (3,2) node {\large$0$}
                                (3,3) node {\large$0$};                                                                
\end{tikzpicture}
\caption{Result of computation}
\label{ssP1r}
\end{minipage}
\end{figure}

	\begin{proposition}\label{P2k3} 
$\RR p_*\T_{\PP^{2}} \cong k^{3}$ where $k^{3}$ is concentrated in degree zero.
	\end{proposition}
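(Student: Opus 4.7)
The plan is to run exactly the same argument as for $\PP^1$ in Proposition~\ref{P1k2}, using the hypercohomology spectral sequence \eqref{SS} with $X = \PP^2$. Since $\dim \PP^2 = 2$, the $E_2$-page is supported in $i, j \in \{0, 1, 2\}$, with entries
\[
E_2^{i,j} = \Hy^{i}\bigl(\PP^2, \Omega^{j}_{\PP^2/k}\bigr).
\]
I would then invoke the classical computation of the Hodge cohomology of projective space (Bott's formula, or a direct consequence of the Euler exact sequence): for $0 \leq i,j \leq n$,
\[
\Hy^{i}\bigl(\PP^n, \Omega^{j}_{\PP^n/k}\bigr) \;\cong\; \begin{cases} k & \text{if } i = j, \\ 0 & \text{otherwise.} \end{cases}
\]

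Applied to $\PP^2$, this leaves exactly three nonzero entries on the $E_2$-page, each a copy of $k$, located at $(i,j) = (0,0), (1,1), (2,2)$. In particular, all three entries lie on the single anti-diagonal $j - i = 0$, so no nontrivial $d_r$ differentials can occur for any $r \geq 2$, and the spectral sequence collapses at $E_2$. Since the abutment at anti-diagonal $j - i = n$ computes $\Hy^{n}\bigl(p_*\T_{\PP^2}\bigr)$, all cohomology is concentrated in degree zero, giving $\Hy^{0}\bigl(p_*\T_{\PP^2}\bigr) \cong k^3$ and $\Hy^{n} = 0$ for $n \neq 0$. I would present this via the same pair of $E_2$-page pictures used for $\PP^1$, replacing the two-column $3 \times 2$ grid by a $3 \times 3$ grid with $k$'s on the main diagonal and zeros elsewhere.

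There is no serious obstacle: the only nontrivial input is Bott's formula for $\PP^n$, which is a standard citation. The entire argument is a bookkeeping exercise once that vanishing is in hand, and the degeneration of the spectral sequence is automatic from the shape of the nonzero locus. The bilinear form computation (which actually distinguishes classes in $\GW(k)$ rather than just $K_0(k)$) is explicitly deferred by the authors to Subsection~\ref{forms}, so the proposition only needs the underlying graded vector space, making the proof as short as the $\PP^1$ case.
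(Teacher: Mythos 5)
Your argument is correct and takes essentially the same route as the paper: both run the hypercohomology spectral sequence \eqref{SS} for $X=\PP^2$, observe that the only nonzero $E_2$-entries lie on the single diagonal $i=j$ (each equal to $k$), and conclude that the spectral sequence collapses with everything in total degree zero. The only difference is that you cite Bott's formula for $\Hy^i(\PP^n,\Omega^j_{\PP^n})$ wholesale, whereas the paper derives the three relevant groups by hand from Serre duality and the dualized Euler sequence \eqref{dualized} --- a derivation that is not wasted effort, since the connecting homomorphism of that same Euler sequence is reused in Proposition~\ref{dP2k3} to produce the explicit generator of $\Hy^1(\PP^2,T^{*}_{\PP^2})$ needed for the bilinear form computation.
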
 
	\begin{proof}
The reader can refer to Figure \ref{ssP2} for the $E_{2}$-page of \eqref{SS} when $X=\PP^{2}$.

The computation of $\RR p_*\T_{\PP^{2}}$ is similar to the computation of $\RR p_*\T_{\PP^{1}}$. In this case the Koszul complex takes the form
\[
\T_{\PP^{2}}=\sh{O}_{\PP^{2}}\oplus T^{*}_{\PP^{2}}\oplus \wedge^{2}
  T^{*}_{\PP^{2}}.
\]
Thus we have possibly nonzero terms $E_{2}^{i,j}$ for $i,j \in \{0,1,2\}$.

Recall that $\Hy^{i}\bigl(\PP^{k},\sh{O}(n)\bigr)=0$ for all $n$ and all $0<i<k$. Note also that $\wedge^{2} T^{*}_{\PP^{2}} \cong \sh{O}(-3)$. It follows that $\Hy^{1}\bigl(\PP^{2},\wedge^2 T^*_{\PP^2}\bigr)$ and $\Hy^{1}\bigl(\PP^{2},\sh{O}_{\PP^{2}}\bigr)$ are trivial.  Moreover, Serre duality implies that
\[
\Hy^{2}\bigl(\PP^{2},\sh{O}_{\PP^{2}}\bigr)=\Hy^{0}\bigl(\PP^{2},\sh{O}(-3)\bigr)=0
\]
and
\[
\Hy^{2}\bigl(\PP^{2},\sh{O}(-3)\bigr)=\Hy^{0}\bigl(\PP^{2},\sh{O}_{\PP^{2}}\bigr)=k.
\]

Since $T\PP^{n}=\Hom\bigl(\sh{O}(-1),\sh{O}^{n+1}/\sh{O}(-1)\bigr)$, there is a  short exact sequence
\[
0\rightarrow \sh{O}_{\PP^{n}}\rightarrow \sh{O}(-1)^{n+1} \rightarrow T\PP^{n} \rightarrow 0.
\]
The dual sequence is
\begin{equation}\label{dualized}
0\rightarrow T^{*}_{\PP^{n}}\rightarrow \sh{O}(1)^{n+1} \rightarrow \sh{O}_{\PP^{n}} \rightarrow 0.
\end{equation}
From the long exact sequence associated to \eqref{dualized}, it
follows that 
\[
\Hy^{0}\bigl(\PP^{2},T^{*}_{\PP^{2}}\bigr)=\Hy^{2}\bigl(\PP^{2},T^{*}_{\PP^{2}}\bigr)=0$$  and $$\Hy^{1}\bigl(\PP^{2},T^{*}_{\PP^{2}}\bigr)=k.
\]
These computations are summarized in Figure \ref{ssP2r}.
	\end{proof}

\begin{figure}[h!]
\begin{minipage}[b]{0.45\linewidth}
\centering
\begin{tikzpicture}[scale=0.9]
\draw[-stealth] (0.5,0.5) -- (10.8,0.5);
\draw[-stealth] (0.5,0.5) -- (0.5,5);
\draw[black!90]  
         (2.1,0.3) node[below] {\tiny{$0$}}
         (5,0.3) node[below] {\tiny{$1$}}
         (7.9,0.3) node[below] {\tiny{$2$}}
         (9.9,0.3) node[below] {\tiny{$3$}}
         (0.3,1) node[left] {\tiny{$0$}}
         (0.3,2) node[left] {\tiny{$1$}}
         (0.3,3) node[left] {\tiny{$2$}}
         (0.3,4) node[left] {\tiny{$3$}};
\draw (10.8,0.5) node[below] {$i$}
         (0.5,5) node[left] {$j$};
\draw[very thick] 
(2.1,1) node {\small$\Hy^{0}\bigl(\PP^{2},\sh{O}_{\PP^{2}}\bigr)$}
(2.1,2) node {\small$\Hy^{0}\bigl(\PP^{2},T^{*}_{\PP^{2}}\bigr)$}
(2.1,3) node {\small$\Hy^{0}\bigl(\PP^{2},\sh{O}(-3)\bigr)$}
(5,1) node {\small$\Hy^{1}\bigl(\PP^{2},\sh{O}_{\PP^{2}}\bigr)$}
(5,2) node {\small$\Hy^{1}\bigl(\PP^{2},T^{*}_{\PP^{2}}\bigr)$}
(5,3) node {\small$\Hy^{1}\bigl(\PP^{2},\sh{O}(-3)\bigr)$}
(7.9,1) node {\small$\Hy^{2}\bigl(\PP^{2},\sh{O}_{\PP^{2}}\bigr)$}
(7.9,2) node {\small$\Hy^{2}\bigl(\PP^{2},T^{*}_{\PP^{2}}\bigr)$}
(7.9,3) node {\small$\Hy^{2}\bigl(\PP^{2},\sh{O}(-3)\bigr)$};
\draw[black!90,very thick] 
                                (2.1,4) node {\large$0$}
                                (5,4) node {\large$0$}
                                (7.9,4) node {\large$0$}
                                (9.9,1) node {\large$0$}
                                (9.9,2) node {\large$0$}
                                (9.9,3) node {\large$0$}
                                (9.9,4) node {\large$0$};                                                                
\end{tikzpicture}
\caption{\eqref{SS} for $X = \PP^2$.}
\label{ssP2}
\end{minipage}
\hspace{1.4cm}
\begin{minipage}[b]{0.45\linewidth}
\centering
\begin{tikzpicture}[scale=0.8]
\draw[-stealth] (0.5,0.5) -- (5,0.5);
\draw[-stealth] (0.5,0.5) -- (0.5,5);
\draw[black!90]  
         (1,0.3) node[below] {\tiny{$0$}}
         (2,0.3) node[below] {\tiny{$1$}}
         (3,0.3) node[below] {\tiny{$2$}}
         (4,0.3) node[below] {\tiny{$3$}}
         (0.3,1) node[left] {\tiny{$0$}}
         (0.3,2) node[left] {\tiny{$1$}}
         (0.3,3) node[left] {\tiny{$2$}}
         (0.3,4) node[left] {\tiny{$3$}};
\draw (5,0.5) node[below] {$i$}
         (0.5,5) node[left] {$j$};
\draw[very thick] (1,1) node {\large$k$}
                         (2,2) node {\large$k$}
                         (3,3) node {\large$k$};
\draw[black!90,very thick] 
                                (1,2) node {\large$0$}
                                (1,3) node {\large$0$}
                                (1,4) node {\large$0$}
                                (2,1) node {\large$0$}
                                (2,3) node {\large$0$}
                                (2,4) node {\large$0$}
                                (3,1) node {\large$0$}
                                (3,2) node {\large$0$}
                                (3,4) node {\large$0$}
                                (4,1) node {\large$0$}
                                (4,2) node {\large$0$}
                                (4,3) node {\large$0$}
                                (4,4) node {\large$0$};                                                                
\end{tikzpicture}
\caption{Result of computation.}
\label{ssP2r}
\end{minipage}
\end{figure}

	\begin{proposition}\label{Bk4}
$\RR p_*\T_{\Bl_{0}\PP^{2}} \cong k^{4}$ where $k^{4}$ is concentrated in degree zero.
\end{proposition}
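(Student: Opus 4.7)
The plan is to mirror the strategy used for $\PP^1$ and $\PP^2$: set up the hypercohomology spectral sequence \eqref{SS} with $X = \Bl_0 \PP^2$, compute the $E_2$-page entries $E_2^{i,j} = H^i(X, \Omega^j_X)$ for $i, j \in \{0,1,2\}$, observe that only the diagonal entries $(0,0), (1,1), (2,2)$ are nonzero with dimensions $1, 2, 1$, and conclude that the spectral sequence collapses with total $k^4$ concentrated in degree $j - i = 0$.

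The corner entries $E_2^{0,0} = H^0(X, \sh{O}_X) = k$ and $E_2^{2,2} = H^2(X, \omega_X) = k$ (by Serre duality) are immediate. The entries $H^0(X, \Omega^p_X)$ for $p = 1, 2$ vanish because $h^{p,0}$ is a birational invariant of smooth projective varieties in characteristic zero and vanishes on $\PP^2$; their Serre duals $H^2(X, \Omega^{2-p}_X)$ vanish in turn. The vanishing of $E_2^{1,0} = H^1(X, \sh{O}_X)$ follows from the Leray spectral sequence for the blow-up $f: \Bl_0 \PP^2 \to \PP^2$ together with the identity $Rf_* \sh{O}_{\Bl_0 \PP^2} = \sh{O}_{\PP^2}$; Serre duality then gives $E_2^{1,2} = H^1(X, \omega_X) = 0$ as well.

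The essential computation is $E_2^{1,1} = H^1(X, \Omega^1_X) = k^2$. I would compute it via the Leray spectral sequence for $f$, using the standard blow-up identifications $f_* \Omega^1_{\Bl_0 \PP^2} = \Omega^1_{\PP^2}$ and $R^1 f_* \Omega^1_{\Bl_0 \PP^2} = k_0$ (the skyscraper sheaf at the blown-up point). Combined with $H^1(\PP^2, \Omega^1_{\PP^2}) = k$, computed via the dualized Euler sequence as in Proposition \ref{P2k3}, this yields $H^1(\Bl_0 \PP^2, \Omega^1) = k \oplus k = k^2$. Alternatively, one can exploit the description of $\Bl_0 \PP^2$ as the Hirzebruch surface $\mathbb{F}_1 = \PP_{\PP^1}(\sh{O} \oplus \sh{O}(1))$ and apply the relative cotangent sequence for the projection to $\PP^1$, reducing to the type of computation carried out in Proposition \ref{P1k2}.

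The main obstacle is precisely this computation of $R^q f_* \Omega^1_{\Bl_0 \PP^2}$, equivalently the blow-up formula $h^{1,1}(\Bl_0 \PP^2) = h^{1,1}(\PP^2) + 1$. Once it is in hand, all nonzero entries lie on the diagonal $i = j$, so no differentials or extension problems arise and the proposition follows.
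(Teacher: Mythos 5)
Your outline is correct and completable, and it shares the paper's skeleton (the hypercohomology spectral sequence \eqref{SS}, with everything concentrated on the diagonal $i=j$ so that no differentials can occur), but the individual cohomology computations go by a genuinely different route. For the off-diagonal vanishing the paper argues explicitly ($\Hy^1=\Hy^2$ of $\sh{O}_{\Bl_0\PP^2}$ vanish by the argument of Hartshorne V.3.4, the rest by Serre duality), whereas you invoke the birational invariance of $h^{p,0}$ in characteristic zero and $\RR f_*\sh{O}_{\Bl_0\PP^2}\simeq\sh{O}_{\PP^2}$ -- both valid, the latter in fact extractable from the fiberwise vanishing argument in the paper's proof of Claim~\ref{2.9claim}. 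The real divergence is at $\Hy^1(\Bl_0\PP^2,T^*_{\Bl_0\PP^2})=k^2$: you push down to $\PP^2$ via the Leray spectral sequence using $f_*\Omega^1=\Omega^1_{\PP^2}$ and $R^1f_*\Omega^1=k_0$, while the paper stays on $\Bl_0\PP^2$ and uses the relative cotangent sequence $0\to f^*T^*_{\PP^2}\to T^*_{\Bl_0\PP^2}\to\Omega_{\Bl_0\PP^2/\PP^2}\to 0$ (verifying left-exactness in coordinates) together with two auxiliary computations of $\Hy^*(\Bl_0\PP^2,\Omega_{\Bl_0\PP^2/\PP^2})$ and $\Hy^*(\Bl_0\PP^2,f^*T^*_{\PP^2})$. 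You correctly flag the identification of $R^qf_*\Omega^1$ as the main obstacle; note that it is not independent of the paper's ingredients -- applying $R^qf_*$ to that same cotangent sequence, with $\Omega_{\Bl_0\PP^2/\PP^2}\cong i'_*T^*_{\PP^1}$ from Lemma~\ref{Lem1} and $R^{>0}f_*\sh{O}=0$, is the quickest way to establish it, so the two arguments are close cousins. What your version buys is brevity and a cleaner conceptual statement (the blow-up formula $h^{1,1}(\Bl_0\PP^2)=h^{1,1}(\PP^2)+1$); what the paper's version buys is the explicit long exact sequence \eqref{les1}, which is reused in Proposition~\ref{dBl} to exhibit an isotropic vector for the form $Q'$, so a fully Leray-based computation would leave extra work for the $\GW(k)$ stage.
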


In order to prove this we must first carry out two auxilliary computations in \ref{Lem1} and \ref{Lem2}.

	\begin{lemma} \label{Lem1}
\[\
Hy^{i}\bigl(\Bl_{0}\PP^{2};\Omega_{\Bl_{0}\PP^{2}/\PP^{2}}\bigr) \cong
    \begin{cases}
     k & i = 1,\\
     0 & i \neq 1.
   \end{cases}
\]
	\end{lemma}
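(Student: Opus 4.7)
The plan is to identify $\Omega_{\Bl_{0}\PP^{2}/\PP^{2}}$ as a torsion sheaf supported on the exceptional divisor $E \simeq \PP^{1}$, then use that cohomology along a closed immersion passes to cohomology on the source, and finish with a standard computation on $\PP^{1}$.

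For the local computation, I would cover $\Bl_{0}\PP^{2}$ by the two standard affine charts with coordinates $(x,t)$ and $(s,y)$, where the blowup map $f$ is given by $(x,t) \mapsto (x, xt)$ and $(s,y) \mapsto (sy,y)$, with transition $s = 1/t$. The right-exact first fundamental sequence
\[
f^{*}\Omega_{\PP^{2}} \to \Omega_{\Bl_{0}\PP^{2}} \to \Omega_{\Bl_{0}\PP^{2}/\PP^{2}} \to 0
\]
realizes the relative cotangent on the first chart as the quotient of the free module on $dx, dt$ by $dx$ and $t\,dx + x\,dt$; this simplifies to the cyclic module generated by $dt$ with relation $x\,dt = 0$. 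Symmetrically, on the second chart it is generated by $ds$ with $y\,ds=0$. The gluing $dt = -s^{-2}\,ds$ on the overlap of $E$ is exactly that of $\Omega_{\PP^{1}} \cong \mathcal{O}_{\PP^{1}}(-2)$, so
\[
\Omega_{\Bl_{0}\PP^{2}/\PP^{2}} \cong i'_{*}\Omega_{\PP^{1}},
\]
where $i':\PP^{1}=E \hookrightarrow \Bl_{0}\PP^{2}$ is the inclusion appearing in diagram \eqref{bus}.

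Since $i'$ is a closed immersion, $\Hy^{i}\bigl(\Bl_{0}\PP^{2};\,i'_{*}\Omega_{\PP^{1}}\bigr) \cong \Hy^{i}(\PP^{1};\Omega_{\PP^{1}})$. Using $\Omega_{\PP^{1}} \cong \mathcal{O}(-2)$, the classical computation (or Serre duality on $\PP^{1}$) gives $\Hy^{0}=0$, $\Hy^{1}=k$, and vanishing in higher degrees by dimension, which is the claim.

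The only step with substantive content is the identification of $\Omega_{\Bl_{0}\PP^{2}/\PP^{2}}$ with $i'_{*}\Omega_{\PP^{1}}$; everything else is standard cohomological bookkeeping. That identification is an elementary chart-level computation, so no real obstacle arises.
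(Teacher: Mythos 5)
Your proof is correct and follows essentially the same route as the paper's: both reduce to a chart computation on $\Bl_{0}\bA^{2}\to\bA^{2}$ to identify $\Omega_{\Bl_{0}\PP^{2}/\PP^{2}}$ with $i'_{*}\Omega_{\PP^{1}}$, then transfer cohomology along the closed immersion and use $H^{*}(\PP^{1},\mathcal{O}(-2))$. The only cosmetic difference is that you verify the gluing $dt=-s^{-2}\,ds$ by hand, whereas the paper first shows the sheaf is scheme-theoretically supported on $E$ and then invokes compatibility of K\"ahler differentials with pullback to identify the restriction with $\Omega_{\PP^{1}}$.
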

	\begin{proof}
Let $i':\PP^{1} \rightarrow \Bl_{0}\PP^{2}$ be the inclusion of the exceptional divisor.  Since K\"ahler differentials commute with pullback \cite[Tag 01UM]{stacksproject}, the support of $\Omega_{\Bl_{0}\PP^{2}/\PP^{2}}$ is contained in the exceptional divisor. In fact, the canonical map $\Omega_{\Bl_{0}\PP^{2}/\PP^{2}}\to i'_{*}(i')^* \Omega_{\Bl_{0}\PP^{2}/\PP^{2}}$ is an isomorphism. To see this: let $\calI$ denote the sheaf of ideals associated to the closed immersion $i'$.  By the proof of  \cite[Tag 01QY]{stacksproject}, it suffices to see that $\calI  \Omega_{\Bl_{0}\PP^{2}/\PP^{2}} = 0$. Since the support of $\Omega_{\Bl_{0}\PP^{2}/\PP^{2}}$ is contained in the exceptional divisor, it thus suffices to see that $ \calI \Omega_{\Bl_{0}\bA^{2}/\bA^{2}} = 0$. This can be seen by direct computation. The blow-up
\[
\Bl_0 \bA^2 \cong \Proj_{k[x,y]} \frac{k[x,y][Z,W]}{\langle xZ-yW\rangle}
\]
is covered by two affine opens, one canonically isomorphic to $\Spec \frac{k[x,y,\frac{W}{Z}]}{\langle x - y \frac{W}{Z} \rangle}$ and the other canonically isomorphic to $\Spec \frac{k[x,y,\frac{Z}{W}]}{\langle x \frac{Z}{W}- y  \rangle}$. The sheaf of relative differentials of, say, the first over $\Spec k[x,y]$ has a single generator $d\frac{W}{Z}$ with a single relation $-yd\frac{W}{Z}$. The sheaf of differentials of the second is computed similarly, showing that $ \calI \Omega_{\Bl_{0}\bA^{2}/\bA^{2}} = 0$.  Thus  $\Omega_{\Bl_{0}\PP^{2}/\PP^{2}}\cong i'_{*}(i')^* \Omega_{\Bl_{0}\PP^{2}/\PP^{2}}$ as claimed. 
  
It follows that $\Omega_{\Bl_{0}\PP^{2}/\PP^{2}}\cong i'_{*}T^{*}_{\PP^{1}}$ by another application of the commutativity of K\"ahler differentials and pullback\cite[Tag 01UM]{stacksproject}. Thus
\[
\Hy^{i}\bigl(\Bl_{0}\PP^{2};\Omega_{\Bl_{0}\PP^{2}/\PP^{2}}\bigr) \cong \Hy^{i}\bigl(\Bl_{0}\PP^{2}; i'_{*}T^{*}_{\PP^{1}}\bigr) \cong \Hy^{i}\bigl(\PP^{1};T^{*}_{\PP^{1}}\bigr).
\] 
The result then follows from Proposition \ref{P1k2}.
	\end{proof}

	\begin{lemma} \label{Lem2}
\[
\Hy^{i}\bigl(\Bl_{0}\PP^{2};f^{*}T^{*}_{\PP^{2}}\bigr) \cong
    \begin{cases}
     k & i =1, \\
     0 & i \neq 1.
    \end{cases}
\]
	\end{lemma}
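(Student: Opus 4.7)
The plan is to push down to $\PP^{2}$ via the projection formula and then invoke the computation already carried out in Proposition~\ref{P2k3}. First I would apply the projection formula to the blow-up $f:\Bl_{0}\PP^{2}\to\PP^{2}$, obtaining
\[
\RR f_{*}(f^{*}T^{*}_{\PP^{2}}) \;\cong\; T^{*}_{\PP^{2}} \otimes^{\LL} \RR f_{*}\calO_{\Bl_{0}\PP^{2}}.
\]
Next I would invoke the standard fact that, for the blow-up of a smooth point in a smooth variety, one has $\RR f_{*}\calO_{\Bl_{0}\PP^{2}} \simeq \calO_{\PP^{2}}$. Since $f$ is proper birational and $\PP^{2}$ is normal, $f_{*}\calO_{\Bl_{0}\PP^{2}}=\calO_{\PP^{2}}$; the vanishing $R^{i}f_{*}\calO=0$ for $i>0$ follows from the formal function theorem applied to the only nontrivial fiber $E\cong\PP^{1}$, using $\Hy^{i}(\PP^{1},\calO(-n))=0$ for $i>0$ and $n\ge 0$ to handle all the infinitesimal thickenings. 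Alternatively, this can be verified directly on the affine charts of $\Bl_{0}\bA^{2}$ written down in the proof of Lemma~\ref{Lem1}.

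Combining these two inputs gives $\RR f_{*}(f^{*}T^{*}_{\PP^{2}})\simeq T^{*}_{\PP^{2}}$, so the Leray spectral sequence (degenerate, since $R^{i}f_{*}=0$ for $i>0$) yields
\[
\Hy^{i}(\Bl_{0}\PP^{2};\,f^{*}T^{*}_{\PP^{2}}) \;\cong\; \Hy^{i}(\PP^{2};T^{*}_{\PP^{2}}).
\]
Finally I would cite the computation of the right-hand side carried out in the proof of Proposition~\ref{P2k3} using the dualized Euler sequence \eqref{dualized}, which produced $0$ for $i=0,2$ and $k$ for $i=1$. This is exactly the claim.

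The main obstacle is justifying the identification $\RR f_{*}\calO_{\Bl_{0}\PP^{2}}\simeq\calO_{\PP^{2}}$; everything else is a formal consequence of the projection formula and the known cohomology of $T^{*}_{\PP^{2}}$. If one wishes to avoid invoking the formal function theorem, an alternative route is to use the ideal sheaf filtration of $E\subset\Bl_{0}\PP^{2}$ together with Lemma~\ref{Lem1} and the Euler sequence on $E\cong\PP^{1}$ to compute the higher direct images term by term.
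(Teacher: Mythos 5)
Your argument is correct and is genuinely different from the one in the paper. You collapse everything onto $\PP^{2}$ via $\RR f_{*}\calO_{\Bl_{0}\PP^{2}}\simeq\calO_{\PP^{2}}$, the projection formula, and the (degenerate) Leray spectral sequence, reducing the lemma to the already-computed $\Hy^{i}(\PP^{2},T^{*}_{\PP^{2}})$. The paper instead tensors the blow-up descent triangle
\[
\cO_{\PP^2} \to i_* \cO_{\Spec k} \oplus \RR f_* \cO_{\Bl_0\PP^2} \to \RR m_*\cO_{\PP^1}
\]
of Claim~\ref{2.9claim} with $T^{*}_{\PP^{2}}$, pushes forward to $\Spec k$, and solves for $\Hy^{i}(\Bl_{0}\PP^{2},f^{*}T^{*}_{\PP^{2}})$ inside the resulting six-term exact sequence using the known cohomology of the other three corners. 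The two routes rest on essentially the same geometric input --- both ultimately need the vanishing of higher direct images of a line bundle along $f$, which the paper establishes (for $\calO(-E)$ rather than $\calO$) by cohomology and base change inside the proof of Claim~\ref{2.9claim} --- but yours is shorter and avoids the bookkeeping of the long exact sequence, at the cost of not exhibiting the Mayer--Vietoris-style relation among the four varieties that the paper is implicitly setting up for the Bittner-relation computation.

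One small correction: in justifying $R^{i}f_{*}\calO_{\Bl_{0}\PP^{2}}=0$ for $i>0$ via the formal function theorem, the graded pieces of the thickenings of $E$ are $I^{n}/I^{n+1}\cong\calO_{\PP^{1}}(n)$ with $n\ge 0$ (the conormal bundle of the exceptional divisor is $\calO(1)$, not $\calO(-1)$), so the vanishing you need is $\Hy^{1}(\PP^{1},\calO(n))=0$ for $n\ge 0$. As written, ``$\Hy^{i}(\PP^{1},\calO(-n))=0$ for $i>0$ and $n\ge 0$'' is false for $n\ge 2$; the conclusion is unaffected once the sign of the twist is fixed.
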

	\begin{proof}
Consider the blow-up square \eqref{bus}. 
	\begin{claim}\label{2.9claim} 
This gives a distinguished triangle in the bounded derived category of $\PP^2$:
\[
\cO_{\PP^2} \to i_* \cO_{\Spec k} \oplus \RR f_* \cO_{\Bl_0\PP^2} \to \RR m_*\cO_{\PP^1},
\]
where $m:= f\circ i' = i \circ f'.$  
	\end{claim}

Given the claim, note that $T_{\PP^2}^*$ is a flat module (in fact, locally free) since $\PP^2$ is smooth, so tensoring preserves exact sequences. Thus we get a distinguished triangle: 
\[
 T^*_{\PP^2} \to  (T^*_{\PP^2}\otimes \RR i_* \cO_{\Spec k}) \oplus  (T^*_{\PP^2} \otimes \RR f_* \cO_{\Bl_0\PP^2}) \to  T^*_{\PP^2}\otimes \RR m_*\cO_{\PP^1},
\]
Now, using a projection formula from \cite[Theorem 20.49.2]{stacksproject} we have that 
\[
T^*_{\PP^2} \otimes \RR f_* \cO_{\Bl_0\PP^2} \simeq \RR f_*(f^*T_{\PP^2}^*),
\]
and similarly for other terms. Thus we have a distinguished triangle:

\[
T^*_{\PP^2} \to \RR i_*(i^*T_{\PP^2}^*) \oplus  \RR f_*(f^*T^*_{\PP^2}) \to  \RR m_*(m^*T^*_{\PP^2}).
\]
Now, we apply $\RR ({p_{\PP^2}}_*)$ to get a long exact sequence on cohomology.

\begin{equation}\label{les2}
\begin{tikzcd}[column sep=small, row sep=large]
\Hy^{0}\bigl(\PP^{2};T^{*}_{\PP^{2}}\bigr) \arrow{r} &  \Hy^{0}\bigl(\Spec k;i^{*}T^{*}_{\PP^{2}}\bigr)\oplus \Hy^{0}\bigl(\Bl_{0}\PP^{2};f^{*}T^{*}_{\PP^{2}}\bigr) \arrow{r}& \Hy^{0}\bigl(\PP^{1};(i\circ f')^{*}T^{*}_{\PP^{2}}\bigr) \arrow[lld,rounded corners,to path={ -- (5.3,1.02) \tikztonodes -- ([xshift=-62.8ex,yshift=-2.9ex]\tikztostart.south) -- (\tikztotarget.north)}]\\
\Hy^{1}\bigl(\PP^{2};T^{*}_{\PP^{2}}\bigr) \arrow{r} &  \Hy^{1}\bigl(\Spec k;i^{*}T^{*}_{\PP^{2}}\bigr)\oplus \Hy^{1}\bigl(\Bl_{0}\PP^{2};f^{*}T^{*}_{\PP^{2}}\bigr) \arrow{r}& \Hy^{1}\bigl(\PP^{1};(i\circ f')^{*}T^{*}_{\PP^{2}}\bigr)\arrow[lld,rounded corners,to path={ -- (5.3,-0.94) \tikztonodes -- ([xshift=-62.8ex,yshift=-2.9ex]\tikztostart.south) -- (\tikztotarget.north)}]\\
\Hy^{2}\bigl(\PP^{2};T^{*}_{\PP^{2}}\bigr) \arrow{r} &  \Hy^{2}\bigl(\Spec k;i^{*}T^{*}_{\PP^{2}}\bigr)\oplus \Hy^{2}\bigl(\Bl_{0}\PP^{2};f^{*}T^{*}_{\PP^{2}}\bigr) \arrow{r}& \Hy^{2}\bigl(\PP^{1};(i\circ f')^{*}T^{*}_{\PP^{2}}\bigr).
\end{tikzcd}
\end{equation}
Considering dimensions, we see that
\[
\Hy^{1}\bigl(\Spec k;i^{*}T^{*}_{\PP^{2}}\bigr)=\Hy^{2}\bigl(\Spec
k;i^{*}T^{*}_{\PP^{2}}\bigr)=\Hy^{2}\bigl(\PP^{1};(i\circ
f')^{*}T^{*}_{\PP^{2}}\bigr)=0,
\] 
implying that $H^2(\Bl_0\PP^2;f^* T^*_{\PP^2}) = 0.$

From the proof Proposition \ref{P2k3} we know
\[
\Hy^{j}\bigl(\PP^{2};T^{*}_{\PP^{2}}\bigr) = 0
\]
unless $j = 1$, in which case it is $k$.

Recall that $i:\Spec k \hookrightarrow \PP^{2}$ is the inclusion. By definition
$i^{*}T^{*}_{\PP^{2}}=i^{-1}T^{*}_{\PP^{2}}\otimes_{i^{-1}\sh{O}_{\PP^{2}}}\sh{O}_{k}$. Therefore
\[
i^{*}T^{*}_{\PP^{2}}=T^{*}_{\PP^{2}}|_{\Spec
  k}\otimes_{\sh{O}_{\PP^{2}}|_{\Spec k}}\sh{O}_{k}\cong T^{*}_{\PP^{2}}|_{\Spec
  k} \cong \sh{O}_{k}^{\oplus 2}.
\]
This implies
\[
\Hy^{0}\bigl(\Spec k;i^{*}T^{*}_{\PP^{2}}\bigr)=\Hy^{0}\bigl(\Spec k;\sh{O}_{k}^{\oplus 2}\bigr)=\Hy^{0}\bigl(\Spec k;\sh{O}_{k}\bigr)\oplus\Hy^{0}\bigl(\Spec k;\sh{O}_{k}\bigr)=k^{2}.
\]
Observing that $(i\circ f')^{*}T^{*}_{\PP^{2}}\cong (f')^{*}\sh{O}_{k}^{\oplus 2} \cong \sh{O}_{\PP^{1}}^{\oplus 2}$, we get that: 
\[
\Hy^{0}\bigl(\PP^{1};(i\circ f')^{*}T^{*}_{\PP^{2}}\bigr)=\Hy^{0}\bigl(\PP^{1};\sh{O}_{\PP^{1}}^{\oplus 2}\bigr)=k^{2},
\]
and
\[
\Hy^{1}\bigl(\PP^{1};(i\circ f')^{*}T^{*}_{\PP^{2}}\bigr)=\Hy^{1}\bigl(\PP^{1};\sh{O}_{\PP^{1}}^{\oplus 2}\bigr)=\Hy^{1}\bigl(\PP^{1};\sh{O}_{\PP^{1}}\bigr)\oplus \Hy^{1}\bigl(\PP^{1};\sh{O}_{\PP^{1}}\bigr)=0.
\]

Plugging this in to \eqref{les2} gives the exact sequence
\[
0 \rto  k^2 \oplus  \Hy^{0}\bigl(\Bl_{0}\PP^{2};f^{*}T^{*}_{\PP^{2}}\bigr)
  \rto k^2 \rto k \rto \Hy^{1}\bigl(\Bl_{0}\PP^{2};f^{*}T^{*}_{\PP^{2}}\bigr) \rto 0.
\]
The desired result follows.
	\end{proof}

	\begin{proof}[Proof of Claim ~\ref{2.9claim}] 
Consider the short exact sequence of $\cO_{\Bl_0\PP^2}$-modules

\begin{equation*}\label{SES1} 
0 \to \cO_{\Bl_0\PP^2}(-E) \to \cO_{\Bl_0\PP^2} \to i_*' \cO_{E}\to 0, \end{equation*} 
where $E \simeq \PP^1$ is the exceptional divisor of the blow-up. Applying $Rf_*$ gives an exact triangle in the derived category of $\PP^2$

\begin{equation}\label{distinguishedtriangle1} 
\RR f_* \cO_{\Bl_0\PP^2}(-E) \to \RR f_* \cO_{\Bl_0\PP^2} \to \RR f_* (i_*' \cO_{E}), \end{equation} 

whose associated long exact sequence on cohomology gives
\[ 
0 \to f_* \cO_{\Bl_0\PP^2}(-E) \to f_*\cO_{\Bl_0\PP^2}\simeq \cO_{\PP^2} \to m_*\cO_{E} \simeq i_*\cO_{\Spec k} \to \RR^1f_*\cO_{\Bl_0\PP^2}(-E).
\]
But in fact $\RR^if_*\cO_{\Bl_0\PP^2}(-E)=0$ for $i>0$, as the following argument shows. Note first that $\RR^i f_{*}\cO_{\Bl_0\PP^2}(-E))$ is a coherent sheaf $\forall i\geq 0$ by \cite[Theorem 30.19.1]{stacksproject}. 

It is enough to check that the fibers are all zero. Combining \cite[Theorem 3.2.1]{egaIII}, \cite[Exercise II.3.10]{HartshorneAG}, and \cite[Corollary III.9.4]{HartshorneAG}, we see that the fiber over $p\in \PP^2$ is 
\[
(\RR^if_{*}\cO_{\Bl_0\PP^2}(-E))(p)\cong H^i(\Bl_0\PP^2_p,\cO_{\Bl_0\PP^2}(-E)_{f^{-1}(p)})\cong H^i(f^{-1}(p), \cO_{\Bl_0\PP^2}(-E)).
\]

Consider the case where $p\in \PP^2-0$. Since $f|_{\PP^2-0}\colon \Bl_0\PP^2-E\to \PP^2-0$ is an isomorphism, $f^{-1}(p)$ is a point and therefore affine. Thus $H^i(f^{-1}(p), \cO_{\Bl_0\PP^2}(-E))=0$ for all $i>0$, as desired. 

Consider the case $p=0$. Then we have $f^{-1}(p)\cong \PP^1$, and $$H^1(f^{-1}(p), \cO_{\Bl_0\PP^2}(-E))\cong H^1(\PP^1, \cO_{\PP^1}(1))=0,$$ as desired. Therefore $\RR^if_{*}\cO_{\Bl_0\PP^2}(-E)=0$ for $i>0$, and so we get a short exact sequence 

\begin{equation*}\label{SES2} 
0 \to f_*\cO_{\Bl_0\PP^2}(-E) \to \cO_{\PP^2} \to i_*\cO_{\Spec k} \to 0,  \end{equation*} 
which in fact represents a distinguished triangle

\begin{equation*}\label{distinguishedtriangle3} 
\RR f_*\cO_{\Bl_0\PP^2}(-E) \to \cO_{\PP^2} \to i_*\cO_{\Spec k},
\end{equation*}
in the derived category of $\PP^2$ with the property that the shift map $i_*\cO_{\Spec k} \to \RR f_*\cO_{\Bl_0\PP^2}(-E)[1]$ is zero. By \cite[Theorem 13.4.10]{stacksproject}, we get that 
\begin{equation}\label{keyidentification}
\cO_{\PP^2} =  \RR f_*\cO_{\Bl_0\PP^2}(-E) \oplus i_*\cO_{\Spec k}
\end{equation}
in the derived category of $\PP^2.$ 

Now, note that \eqref{distinguishedtriangle1} becomes

\begin{equation}\label{distinguishedtriangle4}  
\RR f_* \cO_{\Bl_0\PP^2}(-E) \to \RR f_* \cO_{\Bl_0\PP^2} \to \RR m_* \cO_{\PP^1}.\end{equation}
The modifications to the last term follow from recalling that $E \simeq \PP^1$ and that the inclusion $i'\colon \PP^1\to X'$ is affine. By \cite[Theorem 36.5.3]{stacksproject}, $\RR i_*' = i_*'$, and hence 
\[
\RR f_*( i_*' \cO_E) =\RR f_* \RR i'_* \cO_{\PP^1}= \RR m_* \cO_{\PP^1}.
\]
Moreover, we also have a trivial distinguished triangle:

\begin{equation}\label{distinguishedtriangle5}
 i_*\cO_{\Spec k} \to i_*O_{\Spec k} \to 0. 
\end{equation}

Using \cite[Lemma 13.4.9]{stacksproject} we get a distinguished triangle by summing \eqref{distinguishedtriangle4} and \eqref{distinguishedtriangle5}: 

\begin{equation*}\label{distinguishedtriangle6}  
\RR f_* \cO_{\Bl_0\PP^2}(-E)\oplus  i_*\cO_{\Spec k} \to \RR f_* \cO_{\Bl_0\PP^2}\oplus  i_*\cO_{\Spec k} \to \RR m_* \cO_{\PP^1}.
\end{equation*}

But, appealing to \eqref{keyidentification}, this is precisely the distinguished triangle claimed.
	\end{proof}

We are now ready to prove Proposition~\ref{Bk4}.

	\begin{proof}[Proof of Proposition~\ref{Bk4}] 
Consider the top row $E^2$-page of the spectral sequence (\ref{SS}) for $\Bl_{0}\PP^{2}$; this is given in Figure~\ref{ssBl}.   Following the proof of \cite[Proposition V.3.4]{HartshorneAG} we see that $\Hy^{2}\bigl(\Bl_{0}\PP^{2},\sh{O}_{\Bl_{0}\PP^{2}}\bigr)= \Hy^{1}\bigl(\Bl_{0}\PP^{2},\sh{O}_{\Bl_{0}\PP^{2}}\bigr)=0$. Since $\Bl_{0}\PP^{2}$ is smooth and compact, Serre duality implies that
\begin{align*}
  \Hy^{2}\bigl(\Bl_{0}\PP^{2},\omega_{\Bl_{0}\PP^{2}}\bigr) &=\Hy^{0}\bigl(\Bl_{0}\PP^{2},\sh{O}_{\Bl_{0}\PP^{2}}\bigr)= k, \\
  \Hy^{1}\bigl(\Bl_{0}\PP^{2},\omega_{\Bl_{0}\PP^{2}}\bigr)&= \Hy^{1}\bigl(\Bl_{0}\PP^{2},\sh{O}_{\Bl_{0}\PP^{2}}\bigr)=0, \hbox{
  and}\\
  \Hy^{0}\bigl(\Bl_{0}\PP^{2},\omega_{\Bl_{0}\PP^{2}}\bigr)&=\Hy^{2}\bigl(\Bl_{0}\PP^{2},\sh{O}_{\Bl_{0}\PP^{2}}\bigr)=0.
\end{align*}

To compute the terms in the second row of the $E_{2}$-page, consider the
blow-up map $f:\Bl_{0}\PP^{2}\rightarrow \PP^{2}$.  There is an exact
sequence of sheaves on $\Bl_{0}\PP^{2}$,
\[
f^{*}T^{*}_{\PP^{2}}\rightarrow T^{*}_{\Bl_{0}\PP^{2}}\rightarrow \Omega_{\Bl_{0}\PP^{2}/\PP^{2}} \rightarrow 0. 
\]
In general this sequence is not left exact, but we will show that $f^{*}T^{*}_{\PP^2}\rightarrow T^{*}_{\Bl_{0}\PP^2}$ is injective in this specific example. We can explicitly write 
\[
\Bl_{0}\PP^2=\Proj\left(\frac{k[x,y][X,Y]}{\langle Xy-xY\rangle}\right)
\] 
where $x$ and $y$ have degree 0 and $X$ and $Y$ have degree 1. We will show the map is injective on the affine open sets 
\[
U_X\defeq \{X\neq 0\}
\]
and 
\[
U_Y\defeq \{Y\neq 0\}.
\]
The situation is symmetric, so we will only give the argument on $U_X$. Observe 
\[
U_X=\Spec\left(\frac{k[x,y][X,Y][1/X]_0}{\langle Xy-xY\rangle}\right)=\Spec\left(\frac{k[x,y,Y/X]}{\langle Y-xY/X\rangle}\right).
\]
Considering $f|_{U_X}\colon U_X\to \Spec k[x,y]\subseteq \PP^2$, we have an explicit description
\[
f^{*}T^{*}_{\PP^2}(U_X) = \O dx\oplus \O dy
\]
and
\[
T^{*}\Bl_{0}\PP^2(U_X) = \O dx\oplus \O d(Y/X).
\]
Under this identification, the map of sheaves $f^{*}T^{*}_{\PP^2}(U_X)\to T^{*}\Bl_{0}\PP^2(U_X)$ is given in coordinates by
\[
dx\mapsto dx
\]
and 
\[
dy\mapsto d(x\frac{Y}{X})=(dx)(\frac{Y}{X})+xd(\frac{Y}{X}).
\]
This is the map we want to show is injective. 

Observe that given $(g,h)\in \O\oplus \O$, $$(g,h)\mapsto (g+h\frac{Y}{X}, xh).$$ If $xh=0 \in \frac{k[x,y,Y/X]}{\langle Y-xY/X\rangle}$, then $h=0$. Therefore if $(0,0)=(g+h\frac{Y}{X},xh)=(g,0),$ we must have $g=0$. Thus, 
\[
\ker(f^{*}T^{*}\PP^2(U_X)\to T^{*}_{\Bl_0\PP^2}(U_X))=\{(0,0)\}, 
\]
as desired. Applying the same argument for $U_Y$, we conclude that $f^{*}T^{*}_{\PP^{2}}\rightarrow T^{*}_{\Bl_{0}\PP^{2}}$ is injective. 

It follows that there is a short exact sequence of coefficients
\[
0\rightarrow f^{*}T^{*}_{\PP^{2}}\rightarrow T^{*}_{\Bl_{0}\PP^{2}}\rightarrow \Omega_{\Bl_{0}\PP^{2}/\PP^{2}} \rightarrow 0,
\]
that induces a long exact sequence on cohomology
\begin{equation}\label{les1}
\begin{tikzcd}[column sep=small, row sep=large]
\Hy^{0}\bigl(\Bl_{0}\PP^{2};f^{*}T^{*}_{\PP^{2}}\bigr) \arrow{r} & \Hy^{0}\bigl(\Bl_{0}\PP^{2};T^{*}_{\Bl_{0}\PP^{2}}\bigr) \arrow{r}& \Hy^{0}\bigl(\Bl_{0}\PP^{2};\Omega_{\Bl_{0}\PP^{2}/\PP^{2}}\bigr) \arrow[lld,rounded corners,to path={ -- (4.1,1.01) \tikztonodes -- ([xshift=-49.7ex,yshift=-2.8ex]\tikztostart.south) -- (\tikztotarget.north)}]\\
\Hy^{1}\bigl(\Bl_{0}\PP^{2};f^{*}T^{*}_{\PP^{2}}\bigr) \arrow{r} & \Hy^{1}\bigl(\Bl_{0}\PP^{2};T^{*}_{\Bl_{0}\PP^{2}}\bigr) \arrow{r}& \Hy^{1}\bigl(\Bl_{0}\PP^{2};\Omega_{\Bl_{0}\PP^{2}/\PP^{2}}\bigr) \arrow[lld,rounded corners,to path={ -- (4.1,-0.94) \tikztonodes -- ([xshift=-49.7ex,yshift=-2.8ex]\tikztostart.south) -- (\tikztotarget.north)}]\\
\Hy^{2}\bigl(\Bl_{0}\PP^{2};f^{*}T^{*}_{\PP^{2}}\bigr) \arrow{r} &\Hy^{2}\bigl(\Bl_{0}\PP^{2};T^{*}_{\Bl_{0}\PP^{2}}\bigr) \arrow{r}& \Hy^{2}\bigl(\Bl_{0}\PP^{2};\Omega_{\Bl_{0}\PP^{2}/\PP^{2}}\bigr)\rightarrow \cdots
\end{tikzcd}
\end{equation}

Substituting the results of Lemmas~\ref{Lem1} and \ref{Lem2} implies that \eqref{les1} takes the form
\[
0 \rightarrow \Hy^{0}\bigl(\Bl_{0}\PP^{2};T^{*}_{\Bl_{0}\PP^{2}}\bigr) \rightarrow 0 \rightarrow k \rightarrow \Hy^{1}\bigl(\Bl_{0}\PP^{2};T^{*}_{\Bl_{0}\PP^{2}}\bigr) \rightarrow k \rightarrow 0 \rightarrow \Hy^{2}\bigl(\Bl_{0}\PP^{2};T^{*}_{\Bl_{0}\PP^{2}}\bigr) \rightarrow 0.
\]
Consequently, 
\[
\Hy^{0}\bigl(\Bl_{0}\PP^{2};T^{*}_{\Bl_{0}\PP^{2}}\bigr)=\Hy^{2}\bigl(\Bl_{0}\PP^{2};T^{*}_{\Bl_{0}\PP^{2}}\bigr)=0
\]
and 
\[
\Hy^{1}\bigl(\Bl_{0}\PP^{2};T^{*}_{\Bl_{0}\PP^{2}}\bigr)=k^{2}.
\]
Therefore, the second page of the spectral sequence takes the form shown in Figure \ref{ssBlr}, which means that the spectral sequence collapses on this page.
	\end{proof}

\begin{figure}[h!]
\begin{minipage}[b]{0.45\linewidth}
\centering
\begin{tikzpicture}[scale=0.9]
\draw[-stealth] (0.5,0.5) -- (11,0.5);
\draw[-stealth] (0.5,0.5) -- (0.5,5);
\draw[black!90]  
         (2.1,0.3) node[below] {\tiny{$0$}}
         (5,0.3) node[below] {\tiny{$1$}}
         (7.9,0.3) node[below] {\tiny{$2$}}
         (9.9,0.3) node[below] {\tiny{$3$}}
         (0.3,1) node[left] {\tiny{$0$}}
         (0.3,2) node[left] {\tiny{$1$}}
         (0.3,3) node[left] {\tiny{$2$}}
         (0.3,4) node[left] {\tiny{$3$}};
\draw (11,0.5) node[below] {$i$}
         (0.5,5) node[left] {$j$};
\draw[very thick] 
(2.1,1) node {\tiny$\Hy^{0}\bigl(\Bl_{0}\PP^{2},\sh{O}_{\Bl_{0}\PP^{2}}\bigr)$}
(2.1,2) node {\tiny$\Hy^{0}\bigl(\Bl_{0}\PP^{2},T^{*}_{\Bl_{0}\PP^{2}}\bigr)$}
(2.1,3) node {\tiny$\Hy^{0}\bigl(\Bl_{0}\PP^{2},\omega_{\Bl_{0}\PP^{2}}\bigr) $}
(5,1) node {\tiny$\Hy^{1}\bigl(\Bl_{0}\PP^{2},\sh{O}_{\Bl_{0}\PP^{2}}\bigr)$}
(5,2) node {\tiny$\Hy^{1}\bigl(\Bl_{0}\PP^{2},T^{*}_{\Bl_{0}\PP^{2}}\bigr)$}
(5,3) node {\tiny$\Hy^{1}\bigl(\Bl_{0}\PP^{2},\omega_{\Bl_{0}\PP^{2}}\bigr) $}
(7.9,1) node {\tiny$\Hy^{2}\bigl(\Bl_{0}\PP^{2},\sh{O}_{\Bl_{0}\PP^{2}}\bigr)$}
(7.9,2) node {\tiny$\Hy^{2}\bigl(\Bl_{0}\PP^{2},T^{*}_{\Bl_{0}\PP^{2}}\bigr)$}
(7.9,3) node {\tiny$\Hy^{2}\bigl(\Bl_{0}\PP^{2},\omega_{\Bl_{0}\PP^{2}}\bigr) $};
\draw[black!90,very thick] 
                                (2.1,4) node {\large$0$}
                                (5,4) node {\large$0$}
                                (7.9,4) node {\large$0$}
                                (9.9,1) node {\large$0$}
                                (9.9,2) node {\large$0$}
                                (9.9,3) node {\large$0$}
                                (9.9,4) node {\large$0$};                                                                
\end{tikzpicture}
\caption{\eqref{SS} for $X = Bl_0\PP^2$}
\label{ssBl}
\end{minipage}
\hspace{1.4cm}
\begin{minipage}[b]{0.45\linewidth}
\centering
\begin{tikzpicture}[scale=0.8]
\draw[-stealth] (0.5,0.5) -- (5,0.5);
\draw[-stealth] (0.5,0.5) -- (0.5,5);
\draw[black!90]  
         (1,0.3) node[below] {\tiny{$0$}}
         (2,0.3) node[below] {\tiny{$1$}}
         (3,0.3) node[below] {\tiny{$2$}}
         (4,0.3) node[below] {\tiny{$3$}}
         (0.3,1) node[left] {\tiny{$0$}}
         (0.3,2) node[left] {\tiny{$1$}}
         (0.3,3) node[left] {\tiny{$2$}}
         (0.3,4) node[left] {\tiny{$3$}};
\draw (5,0.5) node[below] {$i$}
         (0.5,5) node[left] {$j$};
\draw[very thick] (1,1) node {\large$k$}
                         (2,2) node {\large$k^{2}$}
                         (3,3) node {\large$k$};
\draw[black!90,very thick] 
                                (1,2) node {\large$0$}
                                (1,3) node {\large$0$}
                                (1,4) node {\large$0$}
                                (2,1) node {\large$0$}
                                (2,3) node {\large$0$}
                                (2,4) node {\large$0$}
                                (3,1) node {\large$0$}
                                (3,2) node {\large$0$}
                                (3,4) node {\large$0$}
                                (4,1) node {\large$0$}
                                (4,2) node {\large$0$}
                                (4,3) node {\large$0$}
                                (4,4) node {\large$0$};                                                                
\end{tikzpicture}
\caption{Result of computation}
\label{ssBlr}
\end{minipage}
\end{figure}
 
This completes the proof that equality \eqref{pbus} holds after applying the forgetful homomorphism $\GW(k) \to K_0(k)$. It remains to show the relation is satisfied by forms. 

\subsection{Verification in $\GW(k)$}\label{forms}
In this subsection, $k$ denotes a field of characteristic not $2$. We compute the bilinear form on $\RR p_* \T_X$ for $X = \Spec k, \PP^1, \PP^2$ or $\Bl_0\PP^2,$ and complete the verification of equality \eqref{pbus} in $\GW(k)$. We begin by defining elements in $\GW(k)$ which we will use to express these classes. 
\begin{definition}\label{hyperbolic} The hyperbolic bilinear form $H$ is defined to be the rank 2 symmetric bilinear form with Gram matrix 

\begin{equation*}
\left(\begin{array}{cc}
0 & 1  \\
1 & 0 
\end{array}\right).
\end{equation*}
\end{definition} 

	\begin{definition}\label{form_a} 
Let $a\in k$. Then we write $\langle a \rangle$ for the rank 1 bilinear form $(x,y) \mapsto axy.$ 
	\end{definition}
	\begin{remark}\label{H_as_dspeck} 
In $\GW(k)$ there is an equality between the class of  $H$ and $\langle 1 \rangle + \langle -1 \rangle$ in $\GW(k).$
	\end{remark}

We start with computing the form for $\RR p_{*} \T_{\Spec k}:$ recall that the complex $\RR p_{*}\T_{\Spec k}$ is $k$ in degree zero. The following result is immediate:

	\begin{proposition}\label{dspeck}
$\RR p_{*} T_{\Spec k}$ has the trivial duality $\langle 1 \rangle$. 
	\end{proposition}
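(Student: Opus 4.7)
The plan is to unpack the definitions of Section \ref{Example} in the degenerate case $X = \Spec k$. Since $\Spec k$ has dimension $d = 0$, the cotangent sheaf $T^{*}_{\Spec k}$ vanishes, so the Koszul complex reduces to a single term: $\T_{\Spec k} = \wedge^{0} T^{*}_{\Spec k} = \sh{O}_{\Spec k} \cong k$, concentrated in degree $0$. Moreover, the structure map $p_{\Spec k}: \Spec k \to \Spec k$ is the identity, hence $\RR p_{*} \T_{\Spec k} \cong k$ in degree $0$ as claimed.

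Next I would identify the bilinear form $\RR p_{*} \beta_{\Spec k}$. By Definition \ref{koszuleuler}, $\beta_{\Spec k}$ is the composite of the product of forms with the projection onto the top wedge $\wedge^{d} T^{*}_{\Spec k}[d]$. With $d = 0$, both operations are trivial: the multiplication $\T_{\Spec k} \otimes \T_{\Spec k} \to \T_{\Spec k}$ is the standard multiplication $k \otimes k \to k$, and the projection onto the top (and only) wedge is the identity. After applying $\RR p_{*} = \mathrm{id}$, the pairing on the single generator evaluates to $1 \cdot 1 = 1$.

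Thus the Gram matrix of $\RR p_{*} \beta_{\Spec k}$ is the $1 \times 1$ matrix $(1)$, which is precisely the form $\langle 1 \rangle$ of Definition \ref{form_a}. I expect no obstacle here: every step is a direct unraveling of the definitions, and the only substantive observation is the dimensional vanishing $T^{*}_{\Spec k} = 0$.
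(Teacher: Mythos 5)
Your proof is correct and is exactly the intended argument: the paper states this result as ``immediate'' with no written proof, and your unwinding of the definitions (with $d=0$, the Koszul complex collapses to $k$ in degree $0$, $p$ is the identity, and the pairing is multiplication $k\otimes k\to k$) is precisely the direct verification being taken for granted.
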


Next, consider $\PP^1.$ 

	\begin{proposition}\label{dP1k2} 
 $\chi^{\bA^1}(\PP^{1})=H$ in $\GW(k)$.
	\end{proposition}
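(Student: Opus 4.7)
The plan is to read off the bilinear form on $\RR p_* \T_{\PP^1}$ from the collapsing spectral sequence computed in Proposition \ref{P1k2}, match it to the hyperbolic form, and conclude via Theorem \ref{koszulworks} (together with Proposition \ref{koszulagrees}, since the section works with the complex $(\RR p_* \T_X, \RR p_* \beta_X)$ rather than $(\Hdg(X/k), \Tr)$ directly).

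From Proposition \ref{P1k2}, the only nonzero terms on the $E_2 = E_\infty$ page of \eqref{SS} for $X = \PP^1$ are $E^{0,0}_2 = \Hy^0(\PP^1, \sh{O}_{\PP^1}) \cong k$ and $E^{1,1}_2 = \Hy^1(\PP^1, T^*_{\PP^1}) \cong k$, both sitting in total degree $j - i = 0$. Hence $\RR p_* \T_{\PP^1}$ is concentrated in cohomological degree $0$ as $k^2$, and by Definition \ref{GWassociated_to_bilinear_form_on_complex} its class in $\GW(k)$ is just $[\Hy^0(\RR p_* \T_{\PP^1})]$ equipped with the induced bilinear form. I would identify this form using the Hodge description of Definition \ref{defHdg}, which represents the same class by Proposition \ref{koszulagrees}.

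For $d = 1$, the Hodge form pairs each summand $\Hy^i(\Omega^j)$ only with $\Hy^{1-i}(\Omega^{1-j})$, since other cup products land in bigraded pieces without a canonical trace to $k$. Thus the $(0,0)$ summand can only pair nontrivially with the $(1,1)$ summand via the Serre pairing $\Hy^0(\sh{O}_{\PP^1}) \otimes \Hy^1(T^*_{\PP^1}) \xrightarrow{\cup} \Hy^1(T^*_{\PP^1}) \xrightarrow{\Tr} k$, while the self-pairings $(0,0) \otimes (0,0)$ and $(1,1) \otimes (1,1)$ are zero (the latter also because $\wedge^2 T^*_{\PP^1} = 0$). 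Choosing the canonical generator $1 \in \Hy^0(\PP^1, \sh{O}_{\PP^1})$ together with a generator $\omega \in \Hy^1(\PP^1, T^*_{\PP^1})$ normalized so that $\Tr(\omega) = 1$, the Gram matrix of the form in the basis $(1, \omega)$ is $\bigl(\begin{smallmatrix} 0 & 1 \\ 1 & 0 \end{smallmatrix}\bigr)$, which is the hyperbolic form $H$ of Definition \ref{hyperbolic}.

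The only substantive step is the verification that the off-diagonal Serre-duality pairing on $\PP^1$ is perfect and takes value $1$ on the chosen generators, which is classical and makes the whole check essentially automatic once the cohomology groups have been identified. Combining with Theorem \ref{koszulworks} then yields $\chi^{\bA^1}(\PP^1) = H$ in $\GW(k)$, agreeing with the $n = 1$ case of \cite[Example 1.7]{hoyois2015quadratic}.
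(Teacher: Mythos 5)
Your proposal is correct and follows essentially the same route as the paper: both read off $\RR p_*\T_{\PP^1}\cong k^2$ in degree $0$ from the collapsed spectral sequence, observe that the only nontrivial pairing is the Serre-duality pairing between $H^0(\PP^1,\sh{O}_{\PP^1})$ and $H^1(\PP^1,T^*_{\PP^1})$ while the self-pairings vanish, and conclude that the Gram matrix is $\bigl(\begin{smallmatrix}0&1\\1&0\end{smallmatrix}\bigr)$. Your explicit normalization $\Tr(\omega)=1$ and the remark that $\wedge^2 T^*_{\PP^1}=0$ are just a slightly more hands-on phrasing of the paper's identification of the form with the evaluation pairing on $V\oplus V^*$.
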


	\begin{remark}
This provides an alternate proof of the $n=1$ case of \cite[Example 1.7]{hoyois2015quadratic}.
	\end{remark}

	\begin{proof}
Referring to the spectral sequences in Figures \ref{ssP1} and \ref{ssP1r}, we recall that $$\RR p_{*} \T_{\PP^1} \simeq H^0(\PP^1, \cO_{\PP^1}) \oplus H^1(\PP^1, \Omega_{\PP^1}) \simeq k^2,$$ in degree $0$. The cup product composed with the trace, which computes our bilinear form on $\RR p_* \T_{\PP^1}$, gives the Serre duality isomorphism $H^0(\PP^1, \cO_{\PP^1})^* \simeq H^1(\PP^1,\Omega_{\PP^1})$.

From this, we can express the bilinear form of interest in terms of evaluation on $H^0(\PP^1, \cO_{\PP^1})\oplus H^0(\PP^1,\cO_{\PP^1})^* \to k$. Indeed, our form can be written as the composition: 
\[
\big(H^0(\PP^1,\cO_{\PP^1}) \oplus H^0(\PP^1,\cO_{\PP^1})^*\big) \otimes \big(H^0(\PP^1,\cO_{\PP^1}) \oplus H^0(\PP^1,\cO_{\PP^1})^*\big) \to
\]
\[
\big(H^0(\PP^1, \cO_{\PP^1}) \otimes H^0(\PP^1,\cO_{\PP^1})^*\big) \oplus  \big(H^0(\PP^1, \cO_{\PP^1})^* \otimes H^0(\PP^1,\cO_{\PP^1})\big) \to k,
\]
where the first map is projecting onto the cross terms, and the second is given by 
\[
e \otimes f + f' \otimes e' \mapsto f(e) + f'(e').
\]
From this formula, we readily verify that Gram matrix is 
\[
\left(\begin{array}{cc}
0 & 1 \\
1 & 0
\end{array}\right).
\]
	\end{proof}

	\begin{proposition}\label{dP2k3} 
$\chi^{\bA^1}(\PP^{2})=H+\langle 1 \rangle$ in $\GW(k)$.
	\end{proposition}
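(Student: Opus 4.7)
The plan is to mimic the proof of Proposition \ref{dP1k2}, now exploiting the fact that by Proposition \ref{P2k3} the complex $\RR p_{*}\T_{\PP^{2}}$ is concentrated in degree $0$, where as a vector space it decomposes as
\[
\RR p_{*}\T_{\PP^{2}} \;\cong\; H^{0}(\PP^{2},\sh{O}_{\PP^{2}}) \;\oplus\; H^{1}(\PP^{2},T^{*}_{\PP^{2}}) \;\oplus\; H^{2}(\PP^{2},\wedge^{2}T^{*}_{\PP^{2}}),
\]
each summand being one-dimensional. By Definition \ref{GWassociated_to_bilinear_form_on_complex} the class in $\GW(k)$ is simply $[V^{0}]$ equipped with the pairing obtained from cup product followed by the trace on $H^{2}(\PP^{2},\omega_{\PP^{2}})$. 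So the whole problem reduces to computing this pairing on $k^{3}$ with respect to the bigraded decomposition above.

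First, observe that cup product respects the Hodge bigrading, so the pairing is block–diagonal in the sense that the only nonzero blocks are $H^{0}\otimes H^{2}\to H^{2}(\omega_{\PP^{2}})$, its transpose $H^{2}\otimes H^{0}\to H^{2}(\omega_{\PP^{2}})$, and $H^{1}(T^{*}_{\PP^{2}})\otimes H^{1}(T^{*}_{\PP^{2}})\to H^{2}(\omega_{\PP^{2}})$. For the off-diagonal blocks, exactly as in the proof of Proposition \ref{dP1k2}, the cup product composed with the trace realizes the Serre duality isomorphism $H^{0}(\PP^{2},\sh{O}_{\PP^{2}})^{\ast}\cong H^{2}(\PP^{2},\omega_{\PP^{2}})$. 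The same Gram matrix calculation as in the $\PP^{1}$ case then identifies the contribution of $H^{0}\oplus H^{2}$ to the form as the hyperbolic plane $H$ of Definition \ref{hyperbolic}.

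The remaining piece is the self-pairing
\[
H^{1}(\PP^{2},T^{*}_{\PP^{2}}) \otimes H^{1}(\PP^{2},T^{*}_{\PP^{2}}) \;\xrightarrow{\;\cup\;}\; H^{2}(\PP^{2},\omega_{\PP^{2}}) \;\xrightarrow{\;\Tr\;}\; k.
\]
This is a rank one form on a one-dimensional space; I would identify its class in $\GW(k)$ by picking an explicit generator. Using the long exact sequence associated to \eqref{dualized}, the connecting homomorphism identifies the generator of $H^{1}(\PP^{2},T^{*}_{\PP^{2}})$ with the first Chern class $c_{1}(\sh{O}(1))$; equivalently, this is the Fubini--Study/K\"ahler class. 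Its self-intersection equals the degree of a line in $\PP^{2}$, so the trace of the self-cup is $1\in k$, and the form is $\langle 1\rangle$ in the sense of Definition \ref{form_a}.

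The main obstacle is this last step: to name a generator of $H^{1}(\PP^{2},T^{*}_{\PP^{2}})$ whose self-cup-product pairs to $1$ under the trace, rather than to some other unit in $k^{\times}/(k^{\times})^{2}$. I would handle this either by direct \v{C}ech computation using the standard affine cover of $\PP^{2}$, or by appealing to the fact that the Hodge class of a hyperplane section pairs with itself to give the (integer) self-intersection number, which is $1$. Combining the two blocks yields
\[
\chi^{\bA^{1}}(\PP^{2}) \;=\; H + \langle 1\rangle \in \GW(k),
\]
as required, and matches the $n=2$ case of \cite[Example 1.7]{hoyois2015quadratic}.
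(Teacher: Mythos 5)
Your block decomposition is exactly the paper's: the off-diagonal pairing between $H^0(\PP^2,\cO_{\PP^2})$ and $H^2(\PP^2,\omega_{\PP^2})$ gives $H$ by the same argument as Proposition \ref{dP1k2}, and everything reduces to the rank-one form on $H^1(\PP^2,T^*_{\PP^2})$. The difference is in how that rank-one form is evaluated, and this is where essentially all of the paper's work lives: the paper runs an explicit \v{C}ech cocycle computation with the Euler sequence (Claim \ref{cocycle}), tracking a sign against the standard generator of $H^2(\PP^2,\omega_{\PP^2})$ to conclude $\Tr(\delta(1)\cup\delta(1))=1$. You instead propose to identify a generator with $c_1(\cO(1))$ and invoke $\Tr\bigl(c_1(\cO(1))\cup c_1(\cO(1))\bigr)=\deg\bigl(\cO(1)\cdot\cO(1)\bigr)=1$. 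That route is legitimate and in one respect cleaner: since the class of a rank-one form $\langle \Tr(v\cup v)\rangle$ in $\GW(k)$ is unchanged when $v$ is rescaled by $\lambda\in k^\times$, you need not determine exactly which generator the connecting map produces, only that some $k$-rational generator has self-pairing equal to a square, indeed to $1$. What this buys is avoiding two pages of cocycle manipulation; what it costs is a citation that the Grothendieck--Serre trace is normalized compatibly with the cycle class map, so that the Hodge-theoretic self-intersection of a hyperplane is literally $1\in k$ rather than some other unit --- which is precisely the normalization the paper verifies by hand via the cocycle $\alpha$ and the form of \cite[Remark 7.1.1]{HartshorneAG}. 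As written you hedge between the two options (``I would handle this either by\dots''), so to make the argument complete you must either carry out the \v{C}ech computation or supply a precise reference for the trace/cycle-class compatibility; with either of those supplied, the proof is correct.
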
 

	\begin{remark}This provides an alternate proof of the $n=2$ case of \cite[Example 1.7]{hoyois2015quadratic}. 
	\end{remark}

	\begin{proof}
We refer to the spectral sequences depicted in Figures \ref{ssP2} and \ref{ssP2r} for computations of underlying vector spaces. We first note that the rank 2 bilinear form on $H^0(\PP^2,\cO_{\PP^2}) \oplus H^{2}(\PP^2,  \Omega^2_{\PP^2})$ is equal to $H$, by an argument virtually identical to that in Proposition \ref{dP1k2}.

To complete the proposition, we will show that the symmetric bilinear form
\begin{equation}\label{pP2}
\begin{tikzcd}
\Hy^{1}(\PP^{2},T^{*}_{\PP^{2}})\otimes \Hy^{1}(\PP^{2},T^{*}_{\PP^{2}}) \arrow[r,"\cup"] & \Hy^{2}(\PP^{2},\omega_{\PP^{2}}) \arrow[r,"\Tr"]  & k
\end{tikzcd}
\end{equation}
is the map $k\times k \rightarrow k$ given by $(x,y)\mapsto xy$, which is $\langle 1 \rangle$ as defined in Definition \ref{form_a}.

We first find a basis for $\Hy^{1}(\PP^{2},T^{*}_{\PP^{2}})$. To this end, consider the Euler sequence 
\begin{equation*}
\begin{tikzcd}
0 \arrow[r] & T^{*}_{\PP^{2}} \ar[r,"\psi"] & \sh{O}(-1)^{3} \ar[r,"\phi"] & \sh{O}_{\PP^{2}} \ar[r] & 0.
\end{tikzcd}
\end{equation*}
Say $\PP^2=\PP(k[x,y,z])$. Let $f\in k[x,y,z]$, we denote the distinguished open set of $f$ by $\PP^{2}_{f}$. Following \cite[Theorem II.8.13]{HartshorneAG}, the homomorphism $\psi$ is defined on distinguished open sets as follows
\begin{align*}
\psi\left(f_{1}\d{y}{x}+f_{2}\d{z}{x}\right)&=\left(-\frac{y}{x^{2}}f_{1}-\frac{z}{x^{2}}f_{2},\frac{f_{1}}{x},\frac{f_{2}}{x}\right) \text{ on }\PP^{2}_{x},\\ 
\psi\left(g_{1}\d{x}{y}+g_{2}\d{z}{y}\right)&=\left(\frac{g_{1}}{y},-\frac{x}{y^{2}}g_{1}-\frac{z}{y^{2}}g_{2},\frac{g_{2}}{y}\right) \text{ on }\PP^{2}_{y}, \text{ and}\\ 
\psi\left(h_{1}\d{x}{z}+h_{2}\d{y}{z}\right)&=\left(\frac{h_{1}}{z},\frac{h_{2}}{z},-\frac{x}{z^{2}}h_{1}-\frac{y}{z^{2}}h_{2}\right) \text{ on }\PP^{2}_{z}. 
\end{align*}
with $$f_{i}\defeq f_{i}(y/x,z/x),$$ $$g_{i}\defeq g_{i}(x/y,z/y),$$ and $$h_{i}\defeq h_{i}(x/z,y/z)$$ for $i\in\{1,2\}$.  The homomorphism $\phi$ is defined by $$\phi(s_{0},s_{1},s_{2})=xs_{0}+ys_{1}+zs_{2}.$$ 

Let $\delta:\Hy^{0}(\PP^{2},\sh{O}_{\PP^{2}}) \rightarrow \Hy^{1}(\PP^{2},T^{*}_{\PP^{2}})$ be the zeroth connecting homomorphism in the long exact sequence associated to the Euler sequence. Since $\delta$ is an isomorphism, $\delta(1)$ is a basis for $H^{1}(\PP^{2},T^{*}_{\PP^{2}})\cong k$. We next use \v{C}ech cohomology to calculate $\delta(1)$. 

Let $\mathscr{U}$ be the affine covering of $\PP^{2}$ by distinguished open sets $\PP^{2}_{x}$, $\PP^{2}_{y}$ and $\PP^{2}_{z}$. 
In what follows, $\bigl(C^{*}(\mathscr{U},\sh{F}),d^{*}\bigr)$ denotes the \v{C}ech complex associated to $\sh{F}$ and $\mathscr{U}$, where $\sh{F}$ is a sheaf of abelian groups on $\PP^{2}$.

We recall the definition of the connecting homomorphism. The Euler sequence induces a short exact sequence of cochain complexes, \cite[Theorem III.4.5]{HartshorneAG}: 
\[
0\rightarrow C^{*}\bigl(\mathscr{U},T^{*}_{\PP^{2}}\bigr) \rightarrow C^{*}\bigl(\mathscr{U},\sh{O}(-1)^{3}\bigr) \rightarrow C^{*}\bigl(\mathscr{U},\sh{O}_{\PP^{2}}\bigr) \rightarrow 0,
\]
then $\delta$ is defined by diagram chasing in the diagram below, 
\begin{equation}
\begin{tikzcd}\label{cech1}
0 \arrow[r] &C^{0}\bigl(\mathscr{U},T^{*}_{\PP^{2}}\bigr) \arrow[r] \arrow[d,"d^{0}"] & C^{0}\bigl(\mathscr{U},\sh{O}(-1)^{3}\bigr)  \arrow[d,"d^{0}"]  \arrow[r] & C^{0}\bigl(\mathscr{U},\sh{O}_{\PP^{2}}\bigr) \arrow[d,"d^{0}"] \arrow[r] & 0\\
0 \arrow[r] & C^{1}\bigl(\mathscr{U},T^{*}_{\PP^{2}}\bigr) \arrow[r] & C^{1}\bigl(\mathscr{U},\sh{O}(-1)^{3}\bigr)  \arrow[r]  & C^{1}\bigl(\mathscr{U},\sh{O}_{\PP^{2}}\bigr)  \arrow[r] & 0.
\end{tikzcd}
\end{equation}
Take $(1,1,1) \in \sh{O}_{\PP^{2}}(\PP^{2}_{x})\times \sh{O}_{\PP^{2}}(\PP^{2}_{y}) \times \sh{O}_{\PP^{2}}(\PP^{2}_{z})$ the generator of $H^{0}(\PP^{2},\sh{O}_{\PP^{2}})\cong k$. By the definition of $\phi$, $d^{0}$ and $\psi$ we see that $\delta(1,1,1)=\left(\tfrac{y}{x}d\bigl(\tfrac{x}{y}\bigr),\tfrac{z}{y}d\bigl(\tfrac{y}{z}\bigr),\tfrac{z}{x}d\bigl(\tfrac{x}{z}\bigr)\right) \in H^{1}(\PP^{2},T^{*}_{\PP^{2}})$, as diagram \eqref{dc1} illustrates. 
\begin{equation}
\begin{tikzcd}\label{dc1}
& \left(\left(\tfrac{1}{x},0,0\right),\left(0,\tfrac{1}{y},0\right),\left(0,0,\tfrac{1}{z}\right)\right)  \arrow[d,mapsto]  \arrow[r,mapsto] & (1,1,1)\\
\left(\tfrac{y}{x}d\bigl(\tfrac{x}{y}\bigr),\tfrac{z}{y}d\bigl(\tfrac{y}{z}\bigr),\tfrac{z}{x}d\bigl(\tfrac{x}{z}\bigr)\right) \arrow[r,mapsto] & \left(\bigl(\tfrac{1}{x},-\tfrac{1}{y},0\bigr),\bigl(0,\tfrac{1}{y},-\tfrac{1}{z}\bigr),\bigl(\tfrac{1}{x},0,-\tfrac{1}{z}\bigr)\right) &
\end{tikzcd}
\end{equation}

Our next task is to calculte $Q(\delta(1)\otimes\delta(1))$. We first compute $\delta(1)\cup\delta(1)$, as $Q(\delta(1)\otimes\delta(1))=\Tr(\delta(1)\cup\delta(1))$. Consider the diagram below
\begin{equation}\label{cup}
\begin{tikzcd}[column sep=1.45em]
\Hy^{0}(\PP^{2},\sh{O}_{\PP^{2}})\otimes \Hy^{1}(\PP^{2},T^{*}_{\PP^{2}}) \arrow[r,"\delta\otimes id"] \arrow[d,"\cup"] & \Hy^{1}(\PP^{2},T^{*}_{\PP^{2}})\otimes\Hy^{1}(\PP^{2},T^{*}_{\PP^{2}}) \arrow[d,"\cup"] & & \\
\Hy^{1}(\PP^{2}, \sh{O}_{\PP^{2}} \otimes T^{*}_{\PP^{2}}) \arrow[r,"\partial"]  & \Hy^{2}(\PP^{2},T^{*}_{\PP^{2}}\otimes T^{*}_{\PP^{2}}) \arrow[r,"\wedge"]   & \Hy^{2}(\PP^{2},\omega_{\PP^{2}}) \arrow[r,"\Tr"]  & k,
\end{tikzcd}
\end{equation}
where $\partial$ is the first connecting homomorphism in the long exact sequence associated to the short exact sequence
\begin{equation*}
\begin{tikzcd}
0 \arrow[r] & T^{*}_{\PP^{2}}\otimes T^{*}_{\PP^{2}} \ar[r,"\psi\otimes id"] & \sh{O}(-1)^{3}\otimes T^{*}_{\PP^{2}} \ar[r,"\phi\otimes id"] & \sh{O}_{\PP^{2}}\otimes T^{*}_{\PP^{2}} \ar[r] & 0,
\end{tikzcd}
\end{equation*}
which is obtained by tensorizing the Euler sequence with $T^{*}_{\PP^{2}}$. By commutativity of diagram \eqref{cup}, we have that $\delta(1)\cup\delta(1)=\partial(1 \cup \delta(1)) = \partial(\delta(1))$.

\begin{claim}\label{cocycle}
Let $\alpha$ denote the \v{C}ech cocycle 
\[
\dfrac{z^{2}}{xy}\d{x}{z}\wedge \d{y}{z} \in \Hy^{2}(\PP^{2},\omega_{\PP^{2}}).
\]
We claim that the composite $%
\begin{tikzcd}
\Hy^{1}(\PP^{2},\sh{O}_{\PP^{2}} \otimes T^{*}_{\PP^{2}}) \arrow[r,"\partial"]  & \Hy^{2}(\PP^{2},T^{*}_{\PP^{2}}\otimes T^{*}_{\PP^{2}}) \arrow[r,"\wedge"]   & \Hy^{2}(\PP^{2},\omega_{\PP^{2}})
\end{tikzcd}$ satisfies $$\delta(1) \mapsto -\alpha.$$
\end{claim}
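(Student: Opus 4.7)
The plan is to compute $(\wedge\circ\partial)(\delta(1))$ by a Čech diagram chase on the same affine cover $\mathscr{U}=\{\PP^{2}_{x},\PP^{2}_{y},\PP^{2}_{z}\}$ used in \eqref{cech1}, then rewrite the resulting 2-cocycle in the affine coordinates $a=x/z$, $b=y/z$ on $\PP^{2}_{z}$ to recognize $-\alpha$. First, lift the 1-cocycle $\delta(1)$ along $\phi\otimes\operatorname{id}$ by setting $\tilde{t}_{ij}\defeq \sigma_{i}\otimes\delta(1)_{ij}\in C^{1}(\mathscr{U},\sh{O}(-1)^{3}\otimes T^{*}_{\PP^{2}})$, where $\sigma=((1/x,0,0),(0,1/y,0),(0,0,1/z))$ is the 0-cochain of \eqref{dc1}; since $\phi(\sigma_{i})=1$, this is a valid lift.

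Next, apply $d^{1}$ and use the cocycle identity $\delta(1)_{xz}=\delta(1)_{xy}+\delta(1)_{yz}$ (which is just $d^{1}\delta(1)=0$) to collapse the three terms of $(d^{1}\tilde{t})_{xyz}=\tilde{t}_{xy}-\tilde{t}_{xz}+\tilde{t}_{yz}$ to $(\sigma_{y}-\sigma_{x})\otimes\delta(1)_{yz}$. By the very computation recorded in \eqref{dc1}, $\sigma_{y}-\sigma_{x}=(-1/x,1/y,0)=-\psi\bigl(\tfrac{y}{x}d(\tfrac{x}{y})\bigr)=-(\psi\otimes\operatorname{id})(\delta(1)_{xy})$, so $\partial(\delta(1))$ is represented by the 2-cocycle $-\delta(1)_{xy}\otimes\delta(1)_{yz}=-\tfrac{y}{x}d(\tfrac{x}{y})\otimes\tfrac{z}{y}d(\tfrac{y}{z})$ in $C^{2}(\mathscr{U},T^{*}_{\PP^{2}}\otimes T^{*}_{\PP^{2}})$. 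Wedging and rewriting in coordinates $a=x/z$, $b=y/z$, one gets $d(x/y)\wedge d(y/z)=b^{-1}\,da\wedge db$ from $x/y=a/b$ and $y/z=b$, whence
\[
-\tfrac{z}{x}\,d(\tfrac{x}{y})\wedge d(\tfrac{y}{z})\;=\;-\tfrac{1}{ab}\,da\wedge db\;=\;-\tfrac{z^{2}}{xy}\,d(\tfrac{x}{z})\wedge d(\tfrac{y}{z})\;=\;-\alpha,
\]
as claimed.

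The principal obstacle is sign bookkeeping: the minus sign in $-\alpha$ ultimately traces back to the identification $\sigma_{y}-\sigma_{x}=-\psi(\delta(1)_{xy})$ forced by the Čech convention employed in \eqref{dc1}, rather than to anything geometric. Once the lift $\tilde{t}$ is chosen so that the diagram chase collapses cleanly via the cocycle condition on $\delta(1)$, the remaining manipulation with differentials and transition functions on $\PP^{2}_{z}$ is routine.
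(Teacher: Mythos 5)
Your proposal is correct and follows essentially the same route as the paper: your lift $\tilde{t}_{ij}=\sigma_i\otimes\delta(1)_{ij}$ is literally the cochain $A$ used in the paper's proof, and both arguments are the same \v{C}ech diagram chase through $\partial$ followed by the coordinate computation on $\PP^2_z$. The only difference is organizational: by invoking the cocycle identity $\delta(1)_{xz}=\delta(1)_{xy}+\delta(1)_{yz}$ you read off the representative $-\delta(1)_{xy}\otimes\delta(1)_{yz}$ directly, whereas the paper finds the preimage of $d^1(A)$ by introducing the correction term $B$ and discarding a symmetric term that dies under the wedge; the two representatives agree up to such a symmetric term and both wedge to $-\alpha$.
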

Given the claim, we complete the argument. The canonical trace map sends the form of \cite[Remark 7.1.1]{HartshorneAG} to $1$. Since $\alpha$ differs from this by the permutation swapping $x$ and $z$, which has sign $-1$, we deduce that $\alpha$ maps to $-1$ under the trace. By claim \ref{cocycle} it follows that $\Tr(\delta(1)\tensor\delta(1))=1$. 
	\end{proof}

	\begin{proof}[Proof of claim \ref{cocycle}]
We calculate $\partial(\delta(1))$ via the commutative diagram below.
\begin{equation}
\begin{tikzcd}[column sep=small]\label{cech2}
0 \arrow[r] &C^{1}\bigl(\mathscr{U},T^{*}_{\PP^{2}}\otimes T^{*}_{\PP^{2}} \bigr) \arrow[r] \arrow[d,"d^{1}"] & C^{1}\bigl(\mathscr{U},\sh{O}(-1)^{3}\otimes T^{*}_{\PP^{2}}\bigr)  \arrow[d,"d^{1}"]  \arrow[r] & C^{1}\bigl(\mathscr{U},\sh{O}_{\PP^{2}}\otimes T^{*}_{\PP^{2}}\bigr) \arrow[d,"d^{1}"] \arrow[r] & 0\\
0 \arrow[r] & C^{2}\bigl(\mathscr{U},T^{*}_{\PP^{2}}\otimes T^{*}_{\PP^{2}}\bigr) \arrow[r] & C^{2}\bigl(\mathscr{U},\sh{O}(-1)^{3}\otimes T^{*}_{\PP^{2}}\bigr)  \arrow[r]  & C^{2}\bigl(\mathscr{U},\sh{O}_{\PP^{2}}\otimes T^{*}_{\PP^{2}}\bigr)  \arrow[r] & 0.
\end{tikzcd}
\end{equation}
Note that $$\delta(1) = \left(\tfrac{y}{x}\otimes d\bigl(\tfrac{x}{y}\bigr),\tfrac{z}{y}\otimes d\bigl(\tfrac{y}{z}\bigr),\tfrac{z}{x}\otimes d\bigl(\tfrac{x}{z}\bigr)\right) \in H^{1}(\PP^{2},\sh{O}_{\PP^{2}}\otimes T^{*}_{\PP^{2}}).$$ For ease of notation, we define
$$ A \defeq
\left(\Bigl(\frac{y}{x^{2}}\otimes\d{x}{y},0,0\Bigr),\Bigl(0,\frac{z}{y^{2}}\otimes\d{y}{z},0\Bigr),\Bigl(\frac{z}{x^{2}}\otimes\d{x}{z},0,0,\Bigr)\right)$$ and $$ B \defeq \frac{z^{2}}{xy}\d{x}{z}\otimes \d{y}{z}.$$ By definition of $\phi$ we have that $(\phi\otimes id)(A)=\delta(1)$. Moreover, $d^{1}(A)$ is equal to 
\begin{align}\label{dA}
\left(1\otimes\Bigl(\frac{y}{x^{2}}\d{x}{y}-\frac{z}{x^{2}}\d{x}{z}\Bigr),1\otimes\frac{z}{y^{2}}\d{y}{z},0\right) \in \left(\sh{O}(-1)^{3}\otimes T^{*}_{\PP^{2}}\right)(\PP^{2}_{xyz}).
\end{align} To simplify \eqref{dA} we use the equality
\begin{align}\label{diff}
\frac{z}{x}\d{x}{z}=\frac{z}{x}\left(\frac{y}{z}\d{x}{y}+\frac{x}{y}\d{y}{z}\right)=\frac{y}{x}\d{x}{y}+\frac{z}{y}\d{y}{z}.
\end{align}
Substituing \eqref{diff} into the first coordinate of \eqref{dA} yields
\begin{align}\label{dAs}
d^{1}(A)=\left(-\frac{z}{xy},\frac{z}{y^{2}},0\right)\otimes\d{y}{z}.
\end{align}

We now compute $(\psi\otimes id)(B)$ and $d^{1}(A)+(\psi\otimes id)(B)$. By definition of $\psi$ we have
\begin{align}\label{psiB}
(\psi\otimes id)(B)=\left(\frac{z^{2}}{xy}\frac{1}{z},0,-\frac{z^{2}}{xy}\frac{x}{z^{2}}\right)\otimes \d{y}{z}=\left(\frac{z}{xy},0,-\frac{1}{y}\right)\otimes \d{y}{z},
\end{align}
and combining \eqref{dAs} and \eqref{psiB} we obtain
\begin{align*}
d^{1}(A)+(\psi\otimes id)(B)=\left(0,\frac{z}{y^{2}},-\frac{1}{y}\right)\otimes \d{y}{z}=\psi\otimes id\left(-\d{z}{y}\otimes\d{y}{z}\right).
\end{align*}
Note that we have actually proved that $\partial(\delta(1))=\d{z}{y}\otimes\d{y}{z}-B$, as we can see in the diagram below.
\begin{equation}
\begin{tikzcd}
& A  \arrow[d,mapsto]  \arrow[r,mapsto] & \delta(1)\\
d\bigl(\tfrac{z}{y}\bigr)\otimes d\bigl(\tfrac{y}{z}\bigr)-B  \arrow[r,mapsto] & d^{1}(A) &
\end{tikzcd}
\end{equation}

Since
\begin{align*}
\d{z}{y}\otimes\d{y}{z}=\d{z}{y}\otimes\d{y}{z}&=-\frac{y^{2}}{z^{2}}\d{y}{z}\otimes\d{y}{z},
\end{align*}
it follows that $d\bigl(\tfrac{z}{y}\bigr)\wedge d\bigl(\tfrac{y}{z}\bigr)=0$. From this we obtain the claim:
\begin{equation*}
\begin{tikzcd}[row sep=tiny,column sep=scriptsize]
\Hy^{1}(\PP^{2},T^{*}_{\PP^{2}}\otimes\sh{O}_{\PP^{2}}) \arrow[r,"\partial"]  & \Hy^{2}(\PP^{2},T^{*}_{\PP^{2}}\otimes T^{*}_{\PP^{2}}) \arrow[r,"\wedge"]   & \Hy^{2}(\PP^{2},\omega_{\PP^{2}})\\
\delta(1) \ar[r,mapsto] & \d{z}{y}\otimes\d{y}{z}-B \ar[r,mapsto] & -\alpha.
\end{tikzcd}
\end{equation*}
	\end{proof}

	\begin{proposition}\label{dBl}
$\chi^{\bA^1}(\Bl_{0}\PP^{2})=2H$ in $\GW(k)$.
	\end{proposition}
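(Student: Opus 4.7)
The goal is to compute the bilinear form on $\RR p_*\T_{\Bl_0\PP^2} \cong k^4$ from Proposition~\ref{Bk4} and show that its class in $\GW(k)$ is $2H$. My plan is to exploit the block structure of the pairing coming from the Hodge grading: the three surviving summands sit at bidegrees $(0,0)$, $(1,1)$, $(2,2)$ of the $E_2$-page of the spectral sequence~\eqref{SS}, and the cup-and-trace pairing sends $E_\infty^{i,j} \otimes E_\infty^{i',j'}$ to $k$ only when $i+i' = j+j' = 2$. Thus the form block-decomposes into a $2 \times 2$ block on $H^0(\cO_{\Bl_0\PP^2}) \oplus H^2(\omega_{\Bl_0\PP^2})$ and a $2 \times 2$ block on $H^1(\Bl_0\PP^2, T^*_{\Bl_0\PP^2}) \cong k^2$, and I will show each block contributes a copy of $H$.

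The first block is handled verbatim as in Proposition~\ref{dP1k2}: by degree, $H^0 \otimes H^0$ and $H^2 \otimes H^2$ contribute zero, and the surviving cross term is the Serre duality pairing, with Gram matrix $\bigl(\begin{smallmatrix} 0 & 1 \\ 1 & 0 \end{smallmatrix}\bigr) = H$.

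The second block is the main computation. I would take the basis $\eta_1 = c_1(f^*\sh{O}_{\PP^2}(1))$ and $\eta_2 = c_1(\sh{O}_{\Bl_0\PP^2}(E))$ of $H^1(\Bl_0\PP^2, T^*_{\Bl_0\PP^2})$, where $E$ is the exceptional divisor. That these span follows from the long exact sequence~\eqref{les1}: $\eta_1$ generates the image of $H^1(f^*T^*_{\PP^2}) \to H^1(T^*_{\Bl_0\PP^2})$, while $\eta_2$ surjects onto the quotient $H^1(\Omega_{\Bl_0\PP^2/\PP^2}) \cong k$ via the identification of Lemma~\ref{Lem1}. The Gram matrix entries are the algebraic intersection numbers of the corresponding divisor classes: the projection formula together with $f$ being proper and birational gives $\Tr(\eta_1 \cup \eta_1) = 1$ and $\Tr(\eta_1 \cup \eta_2) = 0$ (since $f_*E = 0$), while the classical self-intersection computation for the exceptional divisor of a blow-up of a smooth surface at a closed point gives $\Tr(\eta_2 \cup \eta_2) = -1$. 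Thus the Gram matrix of the second block is $\mathrm{diag}(1,-1)$, which by Remark~\ref{H_as_dspeck} represents $\langle 1 \rangle + \langle -1 \rangle = H$ in $\GW(k)$. Combining, $\chi^{\bA^1}(\Bl_0\PP^2) = 2H$.

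The main technical obstacle is justifying the compatibility of the coherent cup-product pairing on $H^1(\Omega^1)$ with the algebraic intersection pairing on divisors via the first Chern class map; this is classical but should be stated carefully in this setting. If a more self-contained argument in the style of Proposition~\ref{dP2k3} is preferred, one can instead lift $\eta_1$ and $\eta_2$ to explicit \v{C}ech cocycles---pulling back the Euler-sequence construction from Proposition~\ref{dP2k3} for $\eta_1$, and using the short exact sequence $0 \to f^*T^*_{\PP^2} \to T^*_{\Bl_0\PP^2} \to i'_* T^*_{\PP^1} \to 0$ from the proof of Proposition~\ref{Bk4} for $\eta_2$---and compute the three cup products directly as elements of $H^2(\omega_{\Bl_0\PP^2}) \cong k$.
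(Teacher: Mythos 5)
Your proof is correct, but it handles the essential block --- the rank-two form $Q'$ on $\Hy^{1}(\Bl_{0}\PP^{2},T^{*}_{\Bl_{0}\PP^{2}})$ --- by a genuinely different route than the paper. The paper does not compute a Gram matrix for this block at all: it uses the ruling $\pi:\Bl_{0}\PP^{2}\to\PP^{1}$ to produce a single nonzero isotropic vector $v=\pi^{*}(\text{generator of }\Hy^{1}(\PP^{1},\Omega_{\PP^{1}}))$ (isotropic because $\Hy^{2}(\PP^1,\Omega_{\PP^{1}}\otimes\Omega_{\PP^{1}})=0$ and nonzero because $\pi i'=\mathrm{id}$), and then invokes the linear-algebra fact that a nondegenerate binary symmetric form with an isotropic vector over a field of characteristic not $2$ is hyperbolic. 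You instead diagonalize the block explicitly via the intersection theory of the blow-up, taking $\eta_{1}=c_{1}(f^{*}\cO(1))$, $\eta_{2}=c_{1}(\cO(E))$ and the classical matrix $\mathrm{diag}(1,-1)$, which is $\langle 1\rangle+\langle -1\rangle=H$. Your version gives more information (an explicit orthogonal basis, and linear independence of $\eta_1,\eta_2$ comes for free from nonsingularity of the computed matrix), and it generalizes immediately to blow-ups of other surfaces; its cost is exactly the compatibility you flag, namely that $\Tr(c_{1}(L)\cup c_{1}(M))$ equals the intersection number $L\cdot M$ for the specific trace normalization used here. That statement is classical but not free in the paper's framework --- note that the analogous normalization issue is what forces the explicit \v{C}ech cocycle computation in Proposition \ref{dP2k3}, where the answer $\langle 1\rangle$ versus $\langle -1\rangle$ genuinely depends on the sign of the trace map --- whereas for $\Bl_{0}\PP^{2}$ the paper's isotropic-vector argument is insensitive to any rescaling of $\Tr$ and so sidesteps the issue entirely. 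If you pursue your route, either cite a precise reference for the compatibility or carry out the \v{C}ech lift you sketch in your last paragraph; with that in place the argument is complete.
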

	\begin{proof}
As before, we first consider the pairing on $H^0(\Bl_{0}\PP^2,\cO_{\Bl_{0}\PP^2}) \oplus  H^2(\Bl_{0}\PP^2, \Omega^2_{\Bl_{0}\PP^2})$ and show it is $H$ in $\GW(k)$. The argument here is again as in Proposition \ref{dP1k2}.

Now let $Q'$ denote the class of the symmetric bilinear form
\begin{equation*}
\begin{tikzcd}
\Hy^{1}(\Bl_{0}\PP^{2},T^{*}_{\Bl_{0}\PP^{2}})\otimes \Hy^{1}(\Bl_{0}\PP^{2},T^{*}_{\Bl_{0}\PP^{2}}) \arrow[r,"\cup"] & \Hy^{2}(\Bl_{0}\PP^{2},\omega_{\Bl_{0}\PP^{2}}) \arrow[r,"\Tr"]  & k.
\end{tikzcd}
\end{equation*}
We will show that $Q'$ isomorphic to $H$. 

To do this it is enough to find a non-zero element $v$ in $H^1(\Bl_0\PP^2, T^{*}_{\Bl_{0}\PP^{2}})$ such that the image of $v \cup v$ in $H^1(\Bl_0\PP^2, \omega_{\Bl_{0}\PP^{2}})$ is $0$. To see this, note that we may extend $v$ to a basis $\{v,w\}$ of $H^1(\Bl_0\PP^2, T^{*}_{\Bl_{0}\PP^{2}})$. Replacing $w$ by $w - \frac{Q'(w,w)}{2Q'(v,w)} v$ (note that we use that the characteristic is not $2$ here), we obtain a new basis $\{ w',v\}$ so that the Gram matrix of $Q'$ is 
\begin{equation*}
\left(\begin{array}{cc}
0 & Q'(w',v)  \\
Q'(w',v) & 0 
\end{array}\right).
\end{equation*} Rescaling $v$, we have 
\begin{equation*}
\left(\begin{array}{cc}
0 & 1  \\
1 & 0 
\end{array}\right).
\end{equation*} which is $H$ in $\GW(k)$. 

The blow-up $T^{*}_{\Bl_{0}\PP^{2}}$ can be described as the projectivization of total space of the bundle $\calO(-1) \oplus \calO$ on $\PP^1$:
\[
T^{*}_{\Bl_{0}\PP^{2}} \cong \Proj_{\Proj k[x,y,z]} k[x,y,z][S,T]/\langle Sx -Ty \rangle \cong \PP_{\Proj k[S,T]} (\calO(-1) \oplus \calO).
\]
Let $\pi: \Bl_{0}\PP^{2} \to \PP^1$ denote the projection. The map $\pi$ induces a map $H^1(\PP^1, \Omega_{\PP^1}) \to H^1(\Bl_{0}\PP^{2}, \pi^* \Omega_{\PP^1})$. Composing with the map $ \pi^* \Omega_{\PP^1} \to \Omega_{\Bl_{0}\PP^{2}}$, we obtain 
\[
\pi^*: H^1(\PP^1, \Omega_{\PP^1}) \to H^1(\Bl_{0}\PP^{2}, \Omega_{\Bl_{0}\PP^{2}}).
\]
Since $H^2(\PP^2,\Omega_{\PP^1} \otimes \Omega_{\PP^1} ) =0$, any $v$ in the image of $\pi^*$ satisfies $Q'(v,v) = 0$ by naturally of the cup product. 

We have thus reduced the problem to showing that $\pi^*$ is nonzero. Since $\pi i'$ is the identity on $\PP^1$, the composition of $\pi^*$ with the map $$ \Hy^{1}\bigl(\Bl_{0}\PP^{2};T^{*}_{\Bl_{0}\PP^{2}}\bigr) \to \Hy^{1}\bigl(\Bl_{0}\PP^{2};\Omega_{\Bl_{0}\PP^{2}/\PP^{2}}\bigr) \cong \Hy^{1}\bigl(\Bl_{0}\PP^{2};i'_* \Omega_{\PP^1}\bigr) \cong H^1(\PP^1, \Omega_{\PP^1})$$ of Equation \eqref{les1} is the identity. Since $ H^1(\PP^1; \Omega_{\PP^1}) \cong k$, it follows that $\pi^*$ is nonzero as claimed.
	\end{proof}

Now we can verify the equality \eqref{pbus}; that is, that $\chi^{\bA^1}(\PP^{1})+\chi^{\bA^1}(\PP^{2})=\chi^{\bA^1}(\Spec k)+\chi^{\bA^1}(\Bl_{0}\PP^{2}).$ Substituting in the results of Propositions \ref{dspeck}, \ref{dP1k2}, \ref{dP2k3} and \ref{dBl}, we obtain the result. 

\bibliographystyle{alpha}
\bibliography{ABOWZ}
\end{document}